\theoremstyle{plain}
\newtheorem{theorem}{Theorem}[section]
\newtheorem{proposition}[theorem]{Proposition}
\newtheorem{lemma}[theorem]{Lemma}
\newtheorem{corollary}[theorem]{Corollary}
\theoremstyle{definition}
\newtheorem{definition}[theorem]{Definition}
\newtheorem{remark}[theorem]{Remark}
\def\be#1{\begin{equation*}#1\end{equation*}}
\def\ben#1{\begin{equation}#1\end{equation}}
\def\bes#1{\begin{equation*}\begin{split}#1\end{split}\end{equation*}}
\def\besn#1{\begin{equation}\begin{split}#1\end{split}\end{equation}}
\def\ba#1{\begin{align*}#1\end{align*}}
\def\ban#1{\begin{align}#1\end{align}}
\def\given{\typeout{Command 'given' should only be used within bracket command}}
\newcounter{@bracketlevel}
\def\@bracketfactory#1#2#3#4#5#6{
\expandafter\def\csname#1\endcsname##1{%
\addtocounter{@bracketlevel}{1}%
\global\expandafter\let\csname @middummy\alph{@bracketlevel}\endcsname\given%
\global\def\given{\mskip#5\csname#4\endcsname\vert\mskip#6}\csname#4l\endcsname#2##1\csname#4r\endcsname#3%
\global\expandafter\let\expandafter\given\csname @middummy\alph{@bracketlevel}\endcsname
\addtocounter{@bracketlevel}{-1}}%
}
\def\bracketfactory#1#2#3{%
\@bracketfactory{#1}{#2}{#3}{relax}{1mu plus 0.25mu minus 0.25mu}{0.6mu plus 0.15mu minus 0.15mu}
\@bracketfactory{b#1}{#2}{#3}{big}{1mu plus 0.25mu minus 0.25mu}{0.6mu plus 0.15mu minus 0.15mu}
\@bracketfactory{bb#1}{#2}{#3}{Big}{2.4mu plus 0.8mu minus 0.8mu}{1.8mu plus 0.6mu minus 0.6mu}
\@bracketfactory{bbb#1}{#2}{#3}{bigg}{3.2mu plus 1mu minus 1mu}{2.4mu plus 0.75mu minus 0.75mu}
\@bracketfactory{bbbb#1}{#2}{#3}{Bigg}{4mu plus 1mu minus 1mu}{3mu plus 0.75mu minus 0.75mu}
}
\newcounter{ctr}\loop\stepcounter{ctr}\edef\X{\@Alph\c@ctr}%
\edef\csname s\X\endcsname{\noexpand\mathscr{\X}}
\edef\csname c\X\endcsname{\noexpand\mathcal{\X}}
\edef\csname b\X\endcsname{\noexpand\boldsymbol{\X}}
\edef\csname I\X\endcsname{\noexpand\mathbbm{\X}}
\edef\csname r\X\endcsname{\noexpand\mathrm{\X}}
\def\now{%
\minute=\time%
\hour=\time \divide \hour by 60%
\hourMins=\hour \multiply\hourMins by 60%
\advance\minute by -\hourMins%
\zeroPadTwo{\the\hour}:\zeroPadTwo{\the\minute}%
}
\def\zeroPadTwo#1{\ifnum #1<10 0\fi#1}
\numberwithin{equation}{section}
\renewcommand\section{\@startsection {section}{1}{\z@}%
{-3.5ex \@plus -1ex \@minus -.2ex}%
{1.3ex \@plus.2ex}%
{\center\small\sc\mathversion{bold}\MakeUppercase}}
\def\subsection#1{\@startsection {subsection}{2}{0pt}%
{-3.5ex \@plus -1ex \@minus -.2ex}%
{1ex \@plus.2ex}%
{\bf\mathversion{bold}}{#1}}
\def\subsubsection#1{\@startsection{subsubsection}{3}{0pt}%
{\medskipamount}%
{-10pt}%
{\normalsize\itshape}{\kern-2.2ex. #1.}}
\def\blfootnote{\xdef\@thefnmark{}\@footnotetext}
\renewcommand{\cite}{\citet}
\def\^#1{\ifmmode {\mathaccent"705E #1} \else {\accent94 #1} \fi}
\def\~#1{\ifmmode {\mathaccent"707E #1} \else {\accent"7E #1} \fi}
\edef\-#1{\noexpand\ifmmode {\noexpand\bar{#1}} \noexpand\else \-#1\noexpand\fi}
\def\>#1{\vec{#1}}
\def\atop{\@@atop}
\renewcommand{\leq}{\leqslant}
\renewcommand{\geq}{\geqslant}
\renewcommand{\phi}{\varphi}
\newcommand{\eps}{\varepsilon}
\newcommand{\eq}{\eqref}
\newcommand{\bigo}{\mathrm{O}}
\newcommand{\lito}{\mathrm{o}}
\newcommand{\Var}{\mathop{\mathrm{Var}}\nolimits}
\newcommand{\law}{\mathscr{L}}
\def\tsfrac#1#2{{\textstyle\frac{#1}{#2}}}
\newcommand{\ER}{Erd\H{o}s-R\'{e}nyi}
\newcommand{\Bi}{\mathop{\mathrm{Bi}}}
\newcommand{\eqlaw}{\stackrel{d}{=}}
\newcommand{\ergm}{\mathrm{ERGM}}
\newcommand{\glab}{\cG^{\mathrm{lab}}}
\def\gset#1{[#1]_{2}}
\def\s#1{^{(#1)}}
\def\t#1{^{[#1]}}
\newcommand{\dham}{d_{\rH}}
\newcommand{\astar}{a^*}
\begin{document}

\title{\sc\bf\large\MakeUppercase{Approximating stationary distributions   
of fast mixing Glauber dynamics, with applications to exponential random graphs}}
\author{\sc
Gesine Reinert\thanks{Oxford University; {\tt reinert@stats.ox.ac.uk}}\, and Nathan Ross\thanks{University of Melbourne; {\tt nathan.ross@unimelb.edu.au}} }
\maketitle

\begin{abstract}  
We provide a general bound on the Wasserstein distance between two arbitrary distributions of sequences of Bernoulli random variables. The bound is in terms of a mixing quantity for the Glauber dynamics of one of the sequences, and a simple expectation of the other.
The result is applied to estimate, with explicit error, expectations of functions of random vectors for some Ising models and exponential random graphs in ``high temperature" regimes.
\end{abstract}


\section{Introduction}
A high-level heuristic in statistical physics models is that 
systems of dependent random variables having the stationary distribution
of a fast mixing Markov chain are approximately independent. 
Precise statements of this heuristic 
 include spatial-temporal mixing 
conditions (see for example \cite{Mossel2013} and discussion there); 
as well as establishing properties of the dependent variables common to 
independent variables, such as central limit theorems (see for example \cite{Ellis1978});
and concentration inequalities (see for example \cite{Luczak2008}, \cite[Chapter~4]{Chatterjee2005a}, and \cite{Gheissari2017}).
Such results are particularly useful for models  with intractable normalizing constants; for example, Ising models and exponential random graphs;
since it is possible to define and analyze Markov chains 
which only depend on ratios of probabilities and therefore do not require computation of the normalizing constant.

We provide a general result (Theorem~\ref{thm:key} below) that explicitly bounds the Wasserstein distance between distributions of vectors of Bernoulli random variables, in terms of 1) a quantity depending on the mixing 
of the Glauber dynamics of one of the vectors; and 2) a straightforward expectation of the other. The result applies to many models where fast mixing is known, and here we apply it to Ising models and exponential random graphs, sharpening and generalizing some recent results of \cite{Eldan2017a, Eldan2017}.

We  now state a simplified version of our general approximation result, and its implications in some applications;~\eq{eq:cwbd} and Propositions~\ref{prop:2star} and~\ref{prop:tri}.
Let $N\in\IN$ and  $X=(X_1,\ldots, X_N)\in\{0,1\}^{N}$ be a vector of Bernoulli random variables. 
Denote $[N]:=\{1,\ldots,N\}$, and for $s\in[N]$, write
${\rm e}_{s}\in\{0,1\}^{N}$ for the vector with~$1$ in the $s$th coordinate, and 0 in all others, and, for $x\in\{0,1\}^{N}$, denote 
\be{
x\s{s,1}=x+(1-x_{s}){\rm e}_{s}, \hspace{1cm} x\s{s,0}=x-x_{s}{\rm e}_{s},
}
so that $x\s{s,1}$ has a~$1$ in the $s$th coordinate and is otherwise the same as~$x$, and similar for $x\s{s,0}$ except there is a~$0$ in the $s$th coordinate.
 
Define the Glauber dynamics Markov chain $\{ X(m), m =0, 1, \ldots \}$ with transition probabilities 
\be{
\IP\bclr{x\mapsto x\s{s,1}}=\frac{1}{N}-\IP\bclr{x\mapsto x\s{s,0}}=\frac{q_X\bclr{x\s{s,1}|x}}{N}, \quad s\in[N],
}
where
\be{
q_X\bclr{x\s{s,1}|x}:=\IP\bclr{X_s=1|(X_u)_{u\not=s}=(x_u)_{u\not=s}}.
} 
In words, at every step in the chain, we choose a coordinate uniformly at random and
resample it conditional on the values at the other coordinates.
It is easily checked that this chain is reversible with respect to $\law(X)$.

For functions $h:\{0,1\}^N\to \IR$, define
 \be{
 \Delta_{s} h(x):= h(x\s{s,1})- h(x\s{s,0}),
 }
denote the supremum norm by $\norm{\cdot}$, let $\norm{\Delta h}:=\sup_{s\in[N]} \norm{\Delta_s h}$, and   denote the Hamming distance of $x,y\in\{0,1\}^N$ by   $\dham(x,y):=\sum_{s=1}^N \abs{x_s-y_s}$.

\begin{theorem}\label{thm:key1}
Let  $X, Y\in\{0,1\}^N$ be random vectors.
Write
\ben{\label{exponentialform} 
\IP(X=x)=\frac{1}{\kappa} \exp\{L(x)\},
}
for a  function~$L$ and normalising constant~$\kappa$, and
assume  that $Y$ has independent coordinates with $p_s:=\IP(Y_s=1)$, for $s=1,\ldots, n$, satisfying 
\ben{\label{eq:fp2}
p_s=\frac{1+\tanh\bclr{\frac{1}{2} \IE \Delta_s L(Y)}}{2}.
}
Assume that for each $s\in[N]$ and $x\in\{0,1\}^N$,
 there is a coupling $(\tilde U\t{x,s}, \tilde V\t{x,s})$
with ${\mathcal{L}} (\tilde U\t{x,s}) = {\mathcal{L}}(X(1) | X(0)=x\s{s,1})$ and ${\mathcal{L}} (\tilde V\t{x,s}) =  {\mathcal{L}}(X(1) | X(0)=x\s{s,0})$,
 such that for some $0<\rho\leq 1$, 
\ben{\label{eq:glaubcon}
 \IE \, \dham(\tilde U\t{x,s}, \tilde V\t{x,s})\leq (1-\rho),
 }
 and that the Glauber dynamics of~$\law(X)$ are irreducible.
Then for any $h:\{0,1\}^N\to\IR$, 
\ben{\label{eq:1.1} 
\babs{\IE h(X)-\IE h(Y)} \leq   \frac{\norm{\Delta h}}{4N \rho}\sum_{s=1}^N \IE \babs{\Delta_s L(Y)-\IE \Delta_s L(Y)}.
}
\end{theorem}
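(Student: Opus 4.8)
The plan is to run a generator comparison driven by the Glauber dynamics, whose stationary law is $\law(X)$. First I would write $q_s(x):=q_X(x\s{s,1}\mid x)$ and let $\cA=P-I$ be the generator of the chain, where $P$ is its transition operator. Using the elementary identities $h(x\s{s,1})-h(x)=(1-x_s)\Delta_s h(x)$ and $h(x\s{s,0})-h(x)=-x_s\Delta_s h(x)$, a short computation collapses the generator to the clean form
\be{
\cA f(x)=\frac1N\sum_{s=1}^N\bclr{q_s(x)-x_s}\,\Delta_s f(x).
}
Because $\IP(X=x)\propto e^{L(x)}$, the conditional probability factorises as $q_s(x)=e^{L(x\s{s,1})}\big/\bclr{e^{L(x\s{s,1})}+e^{L(x\s{s,0})}}=\tfrac12\bclr{1+\tanh(\tfrac12\Delta_s L(x))}$, which is exactly the shape appearing in~\eqref{eq:fp2}.

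Next I would introduce the Poisson solution $f$ of $\cA f=h-\IE h(X)$, represented as $f(x)=-\sum_{m\geq0}\bclr{\IE[h(X(m))\mid X(0)=x]-\IE h(X)}$; irreducibility together with the contraction~\eqref{eq:glaubcon} guarantees geometric ergodicity and hence convergence of this series. Since $h(y)-\IE h(X)=\cA f(y)$ holds pointwise, evaluating at $Y$ and taking expectations gives
\be{
\IE h(Y)-\IE h(X)=\IE[\cA f(Y)]=\frac1N\sum_{s=1}^N\IE\bcls{\bclr{q_s(Y)-Y_s}\,\Delta_s f(Y)}.
}
Here I would exploit that $Y$ has independent coordinates: both $q_s(Y)$ and $\Delta_s f(Y)$ depend only on $(Y_u)_{u\neq s}$, so conditioning on those coordinates replaces $Y_s$ by its mean $p_s$ and yields $\IE\bcls{(q_s(Y)-p_s)\Delta_s f(Y)}$. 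The defining relation~\eqref{eq:fp2} says $p_s=\tfrac12\bclr{1+\tanh(\tfrac12\IE\Delta_s L(Y))}$, hence $q_s(Y)-p_s=\tfrac12\bclr{\tanh(\tfrac12\Delta_s L(Y))-\tanh(\tfrac12\IE\Delta_s L(Y))}$, and the $1$-Lipschitz property of $\tanh$ bounds this by $\tfrac14\babs{\Delta_s L(Y)-\IE\Delta_s L(Y)}$.

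The remaining ingredient, and the step I expect to be the main obstacle, is the regularity estimate $\norm{\Delta_s f}\leq \norm{\Delta h}/\rho$. For this I would feed the one-step contraction~\eqref{eq:glaubcon} into a path-coupling argument: every pair of Hamming-adjacent configurations is of the form $(x\s{s,1},x\s{s,0})$, so~\eqref{eq:glaubcon} supplies a contractive coupling for each adjacent pair, and path coupling upgrades this to $\IE\,\dham(\tilde U(m),\tilde V(m))\leq(1-\rho)^m$ for the two chains started at $x\s{s,1}$ and $x\s{s,0}$. Combining the telescoping bound $\abs{h(u)-h(v)}\leq\norm{\Delta h}\,\dham(u,v)$ with this contraction gives $\babs{\IE[h(X(m))\mid X(0)=x\s{s,1}]-\IE[h(X(m))\mid X(0)=x\s{s,0}]}\leq\norm{\Delta h}(1-\rho)^m$, and summing the geometric series over $m\geq0$ produces $\norm{\Delta_s f}\leq\norm{\Delta h}/\rho$. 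Inserting the three bounds into the generator identity yields
\be{
\babs{\IE h(X)-\IE h(Y)}\leq\frac1N\sum_{s=1}^N\frac14\cdot\frac{\norm{\Delta h}}{\rho}\,\IE\babs{\Delta_s L(Y)-\IE\Delta_s L(Y)},
}
which is precisely~\eqref{eq:1.1}. The delicate points to watch are the justification that the path-coupling contraction iterates cleanly for a Markovian coupling and that the Poisson series converges, both of which rest on the irreducibility hypothesis and~\eqref{eq:glaubcon}.
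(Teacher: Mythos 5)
Your proposal is correct and follows essentially the same route as the paper: a Stein/generator comparison for the Glauber dynamics, a bound on $\norm{\Delta_s f_h}$ of $\norm{\Delta h}/\rho$ obtained by path coupling from the one-step contraction~\eqref{eq:glaubcon}, and the $1$-Lipschitz property of $\tanh$ together with~\eqref{eq:fp2} to bound $\abs{q_X(Y\s{s,1}|Y)-p_s}$ by $\tfrac14\abs{\Delta_s L(Y)-\IE\Delta_s L(Y)}$. The only (immaterial) differences are that you work with the discrete-time generator $P-I$ and Poisson sum where the paper uses the continuous-time generator and then reduces to the discrete skeleton via jump times, and that you handle the $Y$-generator term by conditioning on $(Y_u)_{u\neq s}$ and independence rather than by invoking $\IE \cA_Y f_h(Y)=0$.
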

Before applying the result, we make a couple of general remarks to aid in interpretation.
\begin{remark}
Regarding the conditions of the theorem: writing the probabilities of~$X$  in the form \eqref{exponentialform}
is without loss of generality assuming $\IP(X=x)>0$ for all $x\in \{0,1\}^N$ (or we allow $L$ to take the value $-\infty$). The condition~\eq{eq:fp2} comes from matching the update probabilities of
the Glauber dynamics of $\law(X)$ to those of the independent chain, while respecting the behaviour of the statistic in the exponent under the product measure; this is the so-called mean field prediction,
c.f., \cite{Eldan2017a}. Solutions to this equation also arise
as critical points for variational problems arising from studying large
deviations in these systems; see \cite{Ellis2006}, \cite{Chatterjee2013}, and discussion below.

Chains with a coupling satisfying~\eq{eq:glaubcon} are  called \emph{contracting} in
\cite{Gheissari2017}, and the condition is 
satisfied by many chains in statistical physics
models in ``high temperature" regimes; see \cite[Chapters~14 and~15]{Levin2009} and applications below. 
The condition is implied by the Dobrushin-Shlosman condition, going back to 
\cite{Dobrushin1985}, which is a classical criterion for fast mixing of the Glauber dynamics, among other nice implications.
Our general result, Theorem~\ref{thm:key},  replaces the Dobrushin-Shlosman condition~\eq{eq:glaubcon} by the more general assumption that the ``influence matrix" of the Glauber dynamics has $p$-norm less than~$1$ (Dobrushin-Shlosman corresponds to $p=1$). Such an assumption still implies fast mixing of the Glauber dynamics; see
 \cite{Dobrushin1970}, \cite{Hayes2006}, and \cite{Dyer2009}.

Also note that under the assumptions of the theorem, the bound is easy to compute since the expectation in~\eqref{eq:1.1} is taken against a vector~$Y$ of \emph{independent} variables. 
A key to our approach 
is that  the intractable normalising constant~$\kappa$ of~$\law(X)$ does not need to be computed in order to analyse the Glauber dynamics.
\end{remark}

\begin{remark}
In a nice situation, $\rho\asymp 1/N$, $\IE \babs{\Delta_s L(Y)-\IE \Delta_s L(Y)}=\bigo(1/\sqrt{N})$, and the bound is then $\norm{\Delta h}\bigo(\sqrt{N})$.
The way to interpret the bound is that for 
functions $h$ such that 
$\norm{\Delta h} \sqrt{N} \ll \IE h(Y)$, $\IE h(X)$ can be approximated by $\IE h(Y)$ with small error. 
If $\IE h(Y)$ is of constant order with $N$, then the function $h$ must satisfy
$\norm{\Delta h}=\lito(1/\sqrt{N})$ for the bound to be meaningful. 
\end{remark}
\begin{remark}

In the case that the chain does not satisfy the mixing assumption~\eq{eq:glaubcon}, or that of the more general Theorem~\ref{thm:key}, 
Lemmas~\ref{lem:genapp1} and~\ref{lem:delf} give an intermediate bound that can be used directly, as is done to prove Theorem~\ref{thm:hightemp} below.
\end{remark}

We apply Theorem~\ref{thm:key1} in some examples and applications.

\subsection{Ising model on a fixed graph}\label{sec:ising}

Let $G$ be a fixed graph on $N$ vertices with vertex set $V(G) = [N]$ and edge set $E(G)$. Each vertex has a label in $\{0,1\}$ attached to it, and a configuration of such labels is denoted by $x \in \{0,1\}^{N}$.
Let $r\in[N-1]$ and for each $s\in[N]$, let $\cN_s\subset [N]\setminus\{s\}$ be a non-random set of size
$\abs{\cN_s}\leq r$, such that $u \in \cN_s \iff s \in \cN_u$.  One can think of $r$ as the largest vertex degree in $G$. Let $X\in  \{0,1\}^{N} $ have the Ising model with ``neighbourhoods" $(\cN_s)_{s\in[N]}$, defined by $\IP(X=x)\propto e^{L(x)}$, where
\be{
L(x)=\frac{\beta}{N} \sum_{s=1}^N \sum_{t\in \cN_s} (2x_s-1)(2x_t-1).
}
To apply Theorem~\ref{thm:key1}, we first compute
\begin{eqnarray*}
\Delta_s L(x) &= &  \frac{\beta}{N} \left\{\sum_{t\in \cN_s} (2-1)(2x_t-1) - 
\sum_{t\in \cN_s} (0-1)(2x_t-1) \right.  \\
&& \left. +  \sum_{u: u\ne s} 
\mathbbm{1}( s \in \cN_u) \{  (2x_u-1)(2-1)- (2x_u-1)(0-1) \} 
\right\} \\
&=& 4 \frac{\beta}{N} \sum_{t\in \cN_s} (2x_t-1),
\end{eqnarray*} 
where we used that $u \in \cN_s \iff s \in \cN_u$.
Hence 
\be{
\Delta_s L(x)=\frac{4 \beta  }{N} \sum_{t\in \cN_s} (2 x_t-1). 
}
For  $(Y_s)_{s\in[N]}$ a vector of independent Bernoulli
variables with $\IP(Y_s=1)={p_s}$,
\be{
\IE \Delta_s L(Y)={\frac{4 \beta }{N} \sum_{t\in\cN_s}(2p_t-1)},
}
and according to~\eq{eq:fp2}, we set the $p_s$ 
to satisfy
\begin{eqnarray} \label{eq:CWsol}
{(2p_s-1)=\tanh\bbbclr{\frac{2\beta}{N}\sum_{t\in\cN_s}(2p_t-1)}}.
\end{eqnarray} 
Equation \eqref{eq:CWsol} always has at least one solution: $p_s \equiv a=1/2$.
For the mixing time, we restrict to the high temperature regime 
where $0<\beta<r/N$. According to \cite[Proof of Theorem~15.1]{Levin2009}, with their $\beta$ corresponding to our $\beta/N$ 
(and see also \cite[Proposition~2.1]{Levin2010} for the case $r=(N-1)$),
there is a coupling of $(U\t{x,s}(1), V\t{x,s}(1))$ such that
\be{
\IE \, \dham(U\t{x,s}(1), V\t{x,s}(1)) \leq \bclr{1-N^{-1}(1-r\tanh(\beta /N))},
}
so that in applying Theorem~\ref{thm:key1}, we can set  $\rho=N^{-1}(1-r \tanh(\beta /N))\geq (1-\beta r/N)N^{-1}$; since $\tanh$ is $1$-Lipschitz.
Hence Theorem~\ref{thm:key1} implies that 
or any $h:\{0,1\}^N\to\IR$, 
\ben{\label{eq:1.1b} 
\babs{\IE h(X)-\IE h(Y)} \leq   \frac{\norm{\Delta h}}{4(1 - \beta r / N) }\sum_{s=1}^N \IE \babs{\Delta_s L(Y)-\IE \Delta_s L(Y)}.
}

Assuming  $p_s\equiv a$, 
it is easy to see that  $\Delta_s L(Y)\eqlaw 8 \beta N^{-1} {B_s}+ c$, where ${B_s\sim\Bi(\abs{\cN_s}},a)$ and $c = - 4 \beta{\abs{\cN_s}} / N$ is a constant. So with $a = \frac12$ and $Y$ a vector of i.i.d.\ Bernoulli$(1/2)$ random variables,
\be{
\IE \abs{\Delta_s L(Y)-\IE \Delta_s L(Y)} \leq \sqrt{\Var(\Delta_s L(Y))}  = \frac{8\beta \sqrt{{\abs{\cN_s}}a(1-a)}}{N}{\leq} \frac{4\beta \sqrt{r}}{N}.
} 
Combining this and~\eq{eq:1.1b}, Theorem~\ref{thm:key1} implies that for any $h:\{0,1\}^N\to\IR$ and $0\leq \beta< r/N$,
\ben{\label{eq:cwbd}
\babs{\IE h(X)-\IE h(Y)}\leq\norm{\Delta h} \frac{\beta \sqrt{r}}{\bclr{1-\beta r/N}}.
}

To put this result in context, the model with ${\abs{\cN_s}}=(N-1)$ is frequently referred to as the Curie-Weiss model and is one of the most well-studied in statistical physics. Laws of large numbers, central  limit theorems with rates, large deviations, concentration and moment inequalities, and local limit theorems with rates have been established for this model; see \cite{Ellis1978}, \cite{Ellis1980}, \cite{Ellis2006}, \cite{Chatterjee2007}, \cite{Eichelsbacher2010},  \cite{Chatterjee2011a}, \cite{Rollin2015}, \cite{Barbour2017}. 
However, it is not obvious how to get an explicit
result like~\eq{eq:cwbd} directly from 
these results. Some very recent work 
closely related to~\eq{eq:cwbd} is \cite[Corollary~13]{Eldan2017a}, which,
under some conditions on $\beta$, gives
\ben{\label{eq:eldisbd}
\babs{\IE h(X)-\IE h(Y)}\leq  601 \norm{\Delta h} e^{2(1+\beta)} \frac{1+\beta}{1-\beta} N^{7/8}.
}
Our bound~\eq{eq:cwbd} in this case is essentially $ \norm{\Delta h}\beta \sqrt{N}/(1-\beta)$, which compares favorably to~\eq{eq:eldisbd} in rate, constant, and dependence on $\beta$ (i.e., goes to zero with $\beta$).


\subsection{Exponential random graph models}
Exponential random graph models (ERGMs), suggested for directed networks by \cite{Holland1981} and for undirected networks by \cite{Frank1986},
are frequently used as parametric statistical models in network analysis, see for example \cite{Wasserman1994}. This is due to the Gibbs form of the distribution, which allows for straightforward implementation of modern Markov chain Monte Carlo (MCMC) methods. However, the models are difficult to analyze directly, and so the stability of MCMC algorithms and the structure of the resulting networks have only recently been meaningfully studied.  Two important references, both in general and for this work, are \cite{Bhamidi2011}, which establishes mixing times for the MCMC (Gibbs samplers) used for ERGMs, and \cite{Chatterjee2013}, which establishes asymptotic properties of these random networks. A further discussion is postponed to after the statement of our definitions and results.

Of particular interest in the analysis of networks are counts of small graphs. These counts occur in the theory of graph limits, see Section 2.2 in \cite{Chatterjee2013} for an overview. Over-and under-represented small subgraphs are also conjectured to be  building blocks of complex networks, see for example \cite{Milo2002}. Small subgraphs are also used for comparing networks; see \cite{Przulj2007}, \cite{Ali2014}. Assessing the exceptionality of small subgraph counts, also called {\it motifs}, depends crucially on the underlying network model, see \cite{Picard2008}. Hence assessing the distribution of small subgraph counts in ERGMs is a key question. While our results do not provide the distribution of these counts, they provide an approximation in terms of the well-studied corresponding distributions for Bernoulli random graphs, together with a bound on the error in the approximation which allows to gauge whether the approximation is appropriate.

We first define the vertex-labeled ERGM.
Let $\glab_n$ be the set of vertex-labeled simple graphs on~$n$ vertices and 
define the set $\gset{n}:=\{(i,j):1\leq i <j \leq n\}$ (this is sometimes also denoted $\binom{[n]}{2}$ but we prefer our notation for typesetting purposes). We 
identify $\glab_n$ with $\{0,1\}^{\binom{n}{2}}$ by encoding
$x\in\glab_n$ by an ordered collection of $0-1$ valued variables: $x=(x_{ij})_{(i,j)\in\gset{n}}$, where $x_{ij}=1$ means there is an edge between 
vertices~$i$ and~$j$. We refer to $x_{ij}$ as the $ij$th ``coordinate" 
of~$x$. 
Note that in the general setup above, $N=\binom{n}{2}$.

For a graph $H$, let $V(H)$ denote the vertex set, 
and for $x\in\{0,1\}^{\binom{n}{2}}$, define $t(H,x)$ to be the number of
``edge-preserving" injections from $V(H)$ to $V(x)$; an injection $\sigma$ preserves edges if for all edges $vw$ of $H$, $x_{\sigma(v)\sigma(w)}=1$ (here assuming $\sigma(v)<\sigma(w)$). 

\begin{definition}[Exponential random graph model]\label{def:ergm}
Fix $n\in\IN$ and $k$ connected  graphs $H_1,\ldots,H_k$ with $H_1$ a single edge, and for $\ell=1,\ldots,k$ denote $v_\ell:=\abs{V(H_\ell)}$ (so $v_1=2$), and 
\be{
t_\ell(x)=\frac{t(H_\ell,x)}{n(n-1)\cdots(n-v_\ell+3)}.
}
For $\beta=(\beta_1,\ldots,\beta_k)$ with $\beta_\ell\in\IR$ 
 for $\ell=1,\ldots, k$, 
we say the random graph $X\in\glab_n$ is distributed according to the exponential random graph model with parameters $\beta$, denoted $X\sim\ergm(\beta)$, if for $x\in \glab_n$,
\be{
\IP(X=x)=\frac{1}{\kappa_{n}(\beta)}\exp\bbbbclr{\sum_{\ell=1}^k \beta_\ell t_\ell(x)},
}
where $\kappa_n(\beta)$ is a normalizing constant.
\end{definition}
The scaling in the exponent matches \cite[Definition~1]{Bhamidi2011} and \cite[Sections~3 and~4]{Chatterjee2013}. Note also that $t_1(H_1,x)$ is twice the number of edges of~$x$ so that 
if $k=1$, then $\ergm(\beta)$ has the same law as an \ER\ random
graph (identified above as a collection of $\binom{n}{2}$ i.i.d.\ Bernoulli variables) with edge parameter~$e^{2\beta_1}/(1+e^{2\beta_1})$.

As in \cite{Bhamidi2011} and \cite{Eldan2017}, an important role is
played by the following functions defined on $[0,1]$,
\be{
\Phi(a):=\sum_{\ell=1}^k \beta_\ell \, e_\ell \, a^{e_\ell-1}, \hspace{3mm} \mbox{ and } \hspace{2mm} 
\phi(a):=\frac{1+\tanh(\Phi(a))}{2}=\frac{e^{2\Phi(a)}}{e^{2\Phi(a)}+1},
}
where $e_\ell$ is the number of edges in $H_\ell$. 
In particular, solutions to the equation $\phi(a)=a$ satisfying $\phi'(a)<1$
are key quantities in what follows. 
To help understand where these equations come from, \cite[Theorem~4.2]{Chatterjee2013}, states that if $\beta_\ell\geq0$ for $\ell=2,\ldots,k$, then $X\sim\ergm(\beta)$ is asymptotically close (in the cut metric) to a mixture of \ER\ random graphs where the mixture is over the finite set~$U$ of maximizers in the interval $[0,1]$, of the function 
\ben{\label{eq:posbvar}
\sum_{\ell=1}^k \beta_\ell a^{e_\ell}-\frac{1}{2}\bclr{a \log(a)+(1-a)\log(1-a)}.
}
Moreover, \cite{Chatterjee2013} show that the set $U$ is finite, and possibly only has one element. The connection to solutions of $\phi(a)=a$ is that critical points of~\eq{eq:posbvar} satisfy
\ben{\label{eq:posbvarcrit}
2\Phi(a)=\log\bbbclr{\frac{a}{1-a}},
}
which a little algebra shows is the same as $\phi(a)=a$. 
The second condition that $\phi'(a)<1$ corresponds to the critical point being a
local maximum. Thus, such solutions are key to describing any \ER\ limiting behaviour of ERGMs.

It is intuitively clear that if $\beta_2,\ldots,\beta_k$ are small, then $\ergm(\beta)$ should be close to an \ER\ random graph.  Our first $\ergm$ result explicitly quantifies this heuristic.
Here and below, we denote 
\be{
C_2:=\frac{4}{3\sqrt{3}}< 0.77,
}
which appears in our study as the maximum of the first derivative of $\mathrm{sech}^2(a)$. 
{Moreover, from the next theorem onwards, $Z$ always has  the~\ER\ distribution with parameter~$\astar$, where $\astar\in[0,1]$ satisfies $\astar=\phi(\astar)$.}

\begin{theorem}\label{thm:smallbetas}
For given $\beta_1\in\IR$, $\beta_\ell>0$, $\ell=2,\ldots,k$, assume $\astar\in[0,1]$ satisfies $\astar=\phi(\astar)$, define $A^*:=\max\{\astar,1-\astar\}\leq 1$, set
\bes{
\alpha_1&:=\frac{1}{2}\bclr{\Phi'(\astar)+A^*\Phi''(1)}, \\
\alpha_2&:=\phi'(\astar) +\frac{1}{2}\bbcls{  C_2\left(A^* +n^{-1}\right) \Phi'(1) (\Phi'(\astar)+A^* \Phi'(1))	+ A^* \Phi''(1)\mathrm{sech}^2\bclr{\Phi(\astar)}},	
}
and assume
\ben{\label{eq:gdcond}
1-\min\{\alpha_1, \alpha_2\} =:\gamma>0.
}
Then for $X\sim\ergm(\beta)$, $Z$ having the~\ER\ distribution with parameter~$\astar$, 
and $h:\{0,1\}^{\binom{n}{2}}\to \IR$,
\be{
\abs{\IE h(X)-\IE h(Z)}
	\leq \norm{\Delta h}\binom{n}{2}(4 \gamma)^{-1} \sum_{\ell=2}^k \beta_\ell \sqrt{\Var(\Delta_{12} t_\ell(Z))}.
}
\end{theorem}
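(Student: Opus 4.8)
The plan is to apply Theorem~\ref{thm:key1} with $X\sim\ergm(\beta)$, written in the exponential form \eqref{exponentialform} with $L(x)=\sum_{\ell=1}^k\beta_\ell t_\ell(x)$, and with $Y=Z$ the \ER\ law with parameter $\astar$. Here $N=\binom{n}{2}$, the coordinates are the edge slots $(i,j)\in\gset{n}$, and both $\law(X)$ and $\law(Z)$ are exchangeable under relabelling of vertices, so every coordinate plays the role of $(1,2)$. Irreducibility of the Glauber dynamics is immediate, since $\beta_\ell\in\IR$ forces $\IP(X=x)>0$ for all $x$. Granting the two hypotheses \eqref{eq:fp2} and \eqref{eq:glaubcon} with $\rho=\gamma/\binom{n}{2}$, the stated bound drops out of \eqref{eq:1.1}: with this $\rho$ one has $\tfrac{1}{4N\rho}=\tfrac{1}{4\gamma}$, exchangeability replaces $\sum_{s=1}^N\IE\babs{\Delta_s L(Z)-\IE\Delta_s L(Z)}$ by $\binom{n}{2}\IE\babs{\Delta_{12}L(Z)-\IE\Delta_{12}L(Z)}$, Jensen gives $\IE\babs{\,\cdot\,}\leq\sqrt{\Var(\Delta_{12}L(Z))}$, and Minkowski's inequality (using $\beta_\ell>0$ for $\ell\geq2$) splits this into $\sum_{\ell}\beta_\ell\sqrt{\Var(\Delta_{12}t_\ell(Z))}$.

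The $\ell=1$ term vanishes because flipping edge $(1,2)$ changes the twice-edge-count $t_1$ by the constant $2$, so $\Delta_{12}t_1\equiv2$ has zero variance; this is why the sum starts at $\ell=2$. For the fixed-point hypothesis \eqref{eq:fp2} I would compute, under $Z$, the expected increment $\IE\Delta_{12}t_\ell(Z)=2e_\ell(\astar)^{e_\ell-1}\frac{(n-v_\ell+2)(n-v_\ell+1)}{n(n-1)}$, by counting edge-preserving injections in which $\{1,2\}$ realises one of the $e_\ell$ edges of $H_\ell$ while the remaining $e_\ell-1$ edges are independently present with probability $\astar$. Summing against $\beta_\ell$ shows $\tfrac12\IE\Delta_{12}L(Z)$ equals $\Phi(\astar)$ up to corrections of relative order $1/n$, so that $\astar=\phi(\astar)$ is exactly the mean-field fixed point and \eqref{eq:fp2} holds in the limit; the finite-$n$ discrepancy is what forces the curvature corrections (the $\Phi''(1)$ and $n^{-1}$ terms) to appear rather than a clean equality.

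The main obstacle is verifying the contraction \eqref{eq:glaubcon} with the explicit rate $1-\gamma=\min\{\alpha_1,\alpha_2\}$. I would use a one-step path coupling of the Glauber dynamics started from the Hamming neighbours $x\s{s,1}$ and $x\s{s,0}$: with probability $1/N$ the discrepant coordinate $s$ is selected and resampled from a conditional law depending only on the agreeing coordinates, so the chains couple to agree; with probability $1/N$ each other coordinate $u\neq s$ is selected, and under the optimal Bernoulli coupling the resampled values at $u$ disagree with probability equal to the influence $I(s,u)=\sup_x\babs{g(\tfrac12\Delta_u L(x\s{s,1}))-g(\tfrac12\Delta_u L(x\s{s,0}))}$, where $g(z)=(1+\tanh z)/2$. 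This gives $\IE\,\dham(\tilde U\t{x,s},\tilde V\t{x,s})=1-\tfrac1N+\tfrac1N\sum_{u\neq s}I(s,u)$, so \eqref{eq:glaubcon} with $\rho=\gamma/N$ reduces to the Dobrushin-type influence bound $\sum_{u\neq s}I(s,u)\leq\min\{\alpha_1,\alpha_2\}$.

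The heart of the matter is bounding this influence sum. By the mean value theorem $I(s,u)\leq\tfrac14\mathrm{sech}^2(\xi_u)\babs{\Delta_u L(x\s{s,1})-\Delta_u L(x\s{s,0})}$, where the mixed second difference counts only subgraphs $H_\ell$ containing both edges $s$ and $u$; summing the absolute second differences over $u\neq s$ produces quantities controlled by $\Phi'$ and $\Phi''$ evaluated at $\astar$ and at the worst-case density $1$. The crude estimate $\mathrm{sech}^2\leq1$ yields $\alpha_1$, while retaining the factor $\mathrm{sech}^2$ and controlling how $\xi_u=\tfrac12\Delta_u L(x)$ deviates from its fixed-point value $\Phi(\astar)$—using that $\mathrm{sech}^2$ has derivative bounded by $C_2$ and that the relevant densities lie within $A^*+n^{-1}$ of $\astar$—yields the sharper $\alpha_2$; taking the minimum gives the best rate. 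I expect the bookkeeping of the mixed second differences (their dependence on the graphs $H_\ell$, and keeping the constants and the $n^{-1}$ corrections honest) to be the most delicate part, and it is precisely this computation that pins down the forms of $\alpha_1$ and $\alpha_2$.
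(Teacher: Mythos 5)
Your proposal follows essentially the same route as the paper. The paper's proof of Theorem~\ref{thm:smallbetas} is your plan run through its two technical lemmas: Lemma~\ref{lem:hambd1} is exactly your one-step path coupling identity (distance drops by one when the discrepant edge is selected, grows by one when the optimal Bernoulli coupling at another edge fails, so the mean distance is $1-\binom{n}{2}^{-1}+\binom{n}{2}^{-1}\sum_{st\not=ij} I(ij,st)$), and Lemma~\ref{lem:gdsetbd} is your influence-sum bound via Taylor expansion of $\tanh$, with the crude option $\mathrm{sech}^2\leq 1$ producing $\alpha_1$ and the refined option (the Lipschitz constant $C_2$ of $\mathrm{sech}^2$, plus an $n^{-1}$ from bounding the mixed second differences on the complete graph) producing $\alpha_2$. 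Your observation that the relevant densities automatically lie within $A^*$ of $\astar$ is precisely the paper's device of taking $\eps=A^*$ in \eqref{eq:subctbd}, which makes that hypothesis vacuous; the concluding steps (edge symmetry, $\IE\abs{\cdot}\leq\sqrt{\Var(\cdot)}$, Minkowski, vanishing of the $\ell=1$ term since $\Delta_{12}t_1\equiv 2$) are also the paper's.

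One assertion in your write-up is wrong, and it conceals the only genuine subtlety. The $\Phi''(1)$ and $n^{-1}$ terms in $\alpha_1,\alpha_2$ have nothing to do with the finite-$n$ discrepancy in \eqref{eq:fp2}: they arise entirely inside the contraction estimate of Lemma~\ref{lem:gdsetbd}, from the mean value theorem when summing $\Delta_{st}\Delta_{ij}t_\ell(x)$ over $st$ and from $\Delta_{st}\Delta_{ij}t_\ell(\mathbf{1})\leq 4e_\ell(e_\ell-1)/n$; they cannot absorb a fixed-point error. As you yourself computed, $\tfrac12\IE\Delta_{12}L(Z)=\sum_\ell\beta_\ell e_\ell(\astar)^{e_\ell-1}\tfrac{(n-v_\ell+2)(n-v_\ell+1)}{n(n-1)}$ differs from $\Phi(\astar)$ by $\bigo(1/n)$, so \eqref{eq:fp2} does \emph{not} hold exactly for the \ER\ law with parameter $\astar$, and Theorem~\ref{thm:key1} as stated does not literally apply; saying it ``holds in the limit'' does not repair this. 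Tracking the error through the proof of Theorem~\ref{thm:key1}, via $\babs{q_X(Z\s{s,1}|Z)-\astar}\leq\tfrac14\babs{\Delta_sL(Z)-2\Phi(\astar)}$, produces an extra additive term of order $\norm{\Delta h}\,n/\gamma$ — lower order than the stated bound $\norm{\Delta h}\,n^{3/2}$, but not zero. To be fair, the paper's own proof is silent on this very point (it invokes Theorem~\ref{thm:key1} without comment), so your proposal reproduces the paper's argument, gap included; but a complete write-up would have to either carry this bias term explicitly or note that it is of lower order and adjust the statement accordingly, rather than attribute it to $\alpha_1,\alpha_2$.
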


The condition~\eq{eq:gdcond} may look contrived, but note the theoretically nice fact that a necessary condition for $1-\alpha_2>0$, is that $\phi'(\astar)<1$, and this corresponds to the critical point $\astar$ of~\eq{eq:posbvar} being a local maximum, as explained above.
We now prove a similar result, which has simpler conditions and allows
$\beta_\ell$  to be negative for $\ell\geq2$. Showing how our methods can be used to extend to this case is due to \cite[Proofs of Theorem~2.2 and Proposition~2.4]{Sinulis2018}, which appeared after the first draft of this paper was put on the arXiv.
Define $\abs{\Phi}(a):=\sum_{\ell=1}^k \abs{\beta_\ell} \, e_\ell \, a^{e_\ell-1}$.

\begin{theorem}\label{thm:negbetas}
For given $\beta_\ell\in\IR$, $\ell=1,\ldots,k$, assume that $\frac{1}{2}\abs{\Phi}'(1)<1$ and that 
$\astar\in[0,1]$ satisfies $\astar=\phi(\astar)$.
Then for $X\sim\ergm(\beta)$, $Z$ having the~\ER\ distribution with parameter~$\astar$, 
and $h:\{0,1\}^{\binom{n}{2}}\to \IR$,
\be{
\abs{\IE h(X)-\IE h(Z)}
	\leq \norm{\Delta h}\binom{n}{2}\bbclr{4\bclr{1-\tsfrac{1}{2}\abs{\Phi}'(1)}}^{-1} \sum_{\ell=2}^k \abs{\beta_\ell} \sqrt{\Var(\Delta_{12} t_\ell(Z))}.
}
\end{theorem}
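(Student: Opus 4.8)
The plan is to apply Theorem~\ref{thm:key1} with $X\sim\ergm(\beta)$, $L(x)=\sum_{\ell=1}^k\beta_\ell t_\ell(x)$, $N=\binom n2$, and $Y=Z$ the \ER\ vector all of whose coordinates are $\Be(\astar)$. Irreducibility of the Glauber dynamics is immediate since $\IP(X=x)>0$ for every $x$, so the two genuine inputs are the mean-field matching~\eqref{eq:fp2} and a contraction coefficient $\rho$ obeying~\eqref{eq:glaubcon}; once these are in place the theorem delivers an estimate of the required shape and it remains to simplify the resulting expectation.

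First I would verify~\eqref{eq:fp2}. Because $Z$ has i.i.d.\ coordinates, each edge-preserving injection of $H_\ell$ through the edge $\{1,2\}$ contributes $(\astar)^{e_\ell-1}$ in expectation, giving $\tfrac12\IE\Delta_{12}L(Z)=\Phi(\astar)+\bigo(1/n)$, the error arising solely from the injective-count normalisation $n(n-1)\cdots(n-v_\ell+3)$. Since $\astar=\phi(\astar)$ is equivalent to $\astar=\tfrac12(1+\tanh\Phi(\astar))$, condition~\eqref{eq:fp2} holds up to this $\bigo(1/n)$ term; as it is of smaller order than the fluctuation $\sqrt{\Var(\Delta_{12}t_\ell(Z))}=\bigo(n^{-1/2})$ that drives the bound, it must be absorbed---most cleanly by running the argument from the generator identity behind Theorem~\ref{thm:key1} rather than quoting the theorem verbatim---to keep the clean statement rigorous.

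For the contraction I would use the standard one-step coupling of the Glauber chains started from $x\s{s,1}$ and $x\s{s,0}$: pick the same coordinate $t$ in both and resample it through the optimal coupling of the two conditional laws. If $t=s$ the update depends only on the common coordinates, so the chains coincide and the Hamming distance drops to $0$; if $t\ne s$ they still differ at $s$ and disagree at $t$ with probability the influence $I_{st}$, whence $\IE\,\dham\le1-\tfrac1N\bclr{1-\max_s\sum_{t\ne s}I_{st}}$ and~\eqref{eq:glaubcon} holds with $\rho=\tfrac1N\bclr{1-\max_s\sum_{t\ne s}I_{st}}$. Feeding this into Theorem~\ref{thm:key1} gives $\babs{\IE h(X)-\IE h(Z)}\le\tfrac{\norm{\Delta h}}{4N\rho}\sum_{s=1}^N\IE\babs{\Delta_s L(Z)-\IE\Delta_s L(Z)}$, where $N\rho=1-\max_s\sum_{t\ne s}I_{st}\ge1-\tfrac12\abs{\Phi}'(1)$, so the prefactor is at most $\bbclr{4(1-\tfrac12\abs{\Phi}'(1))}^{-1}$. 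To finish the sum I would invoke exchangeability of the coordinates of $Z$ and the vertex-symmetry of $L$, so each summand equals the $s=\{1,2\}$ term (producing the factor $\binom n2$); Jensen's inequality gives $\IE\babs{\Delta_{12}L(Z)-\IE\Delta_{12}L(Z)}\le\sqrt{\Var(\Delta_{12}L(Z))}$, and Minkowski's inequality bounds $\sqrt{\Var\bclr{\sum_\ell\beta_\ell\Delta_{12}t_\ell(Z)}}\le\sum_\ell\abs{\beta_\ell}\sqrt{\Var(\Delta_{12}t_\ell(Z))}$, with the $\ell=1$ term vanishing because $\Delta_{12}t_1(Z)\equiv2$. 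Collecting the factors yields the claimed bound.

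The main obstacle is the influence estimate $\max_s\sum_{t\ne s}I_{st}\le\tfrac12\abs{\Phi}'(1)$. Using that the conditional resampling probability for coordinate $t$ equals $\tfrac12(1+\tanh(\tfrac12\Delta_t L))$ and that $\tanh$ is $1$-Lipschitz, one has $I_{st}\le\tfrac14\babs{\Delta_t L(x\s{s,1})-\Delta_t L(x\s{s,0})}$, a mixed second difference that counts the copies of each $H_\ell$ containing both edges $s$ and $t$. The delicate point is to sum this over $t$, maximise over all configurations $x$ (the extremum being the all-ones graph), and check that the total is at most $\tfrac12\abs{\Phi}'(1)=\tfrac12\sum_\ell\abs{\beta_\ell}e_\ell(e_\ell-1)$; working with $\abs{\Phi}$ and $\abs{\beta_\ell}$ throughout is precisely what makes the enumeration sign-blind and hence admits negative $\beta_\ell$, following \cite{Sinulis2018}. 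This uniform combinatorial bound, together with the normalisation bookkeeping of the second step, is where essentially all the work lies; the remaining manipulations are routine.
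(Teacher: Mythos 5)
Your proposal is correct and is essentially the paper's own proof: your one-step coupling with the influence estimate $\max_s\sum_{t\ne s}I_{st}\le\tsfrac{1}{2}\abs{\Phi}'(1)$ (via the $1$-Lipschitz property of $\tanh$ and the mixed second difference $\Delta_{st}\Delta_{ij}t_\ell$, maximised at the complete graph) is exactly the content of inequality~\eqref{eq:negsubctbd} of Lemma~\ref{lem:gdsetbd} and of Lemma~\ref{lem:hambd1}, which yield $\rho=\bclr{1-\tsfrac{1}{2}\abs{\Phi}'(1)}/\binom{n}{2}$, after which the paper, like you, concludes by Theorem~\ref{thm:key1}, edge symmetry, Jensen and Minkowski (with the $\ell=1$ term vanishing). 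The $\bigo(1/n)$ mean-field mismatch you flag is genuine---the paper invokes Theorem~\ref{thm:key1} directly even though~\eqref{eq:fp2} holds for $p_s\equiv\astar$ only up to the injective-count normalisation error---so your suggestion to run the generator argument of Lemmas~\ref{lem:genapp1} and~\ref{lem:delf} and absorb this lower-order term is, if anything, more careful than the paper's own treatment.
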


\begin{remark}
The condition from Theorem~\ref{thm:negbetas},
\ben{\label{eq:simpcond}
\abs{\Phi}'(1)<2,
}
is easier to verify than~\eq{eq:gdcond} of Theorem~\ref{thm:smallbetas}.
Moreover, the dependence on $\beta$ of~\eq{eq:simpcond} is more transparent than~\eq{eq:gdcond}, since it does not  involve the fixed point~$\astar$.
In particular, it is  plain to see that for any fixed choice of $H_2,\ldots,H_k$,~\eq{eq:simpcond} holds for $\abs{\beta}$ small enough.
\end{remark}
\begin{remark}\label{rem:var}
For fixed $\astar$, the random variables 
\be{
\Delta_{(1,2)} t_\ell(Z)=\frac{\Delta_{(1,2)} t(H_\ell, Z)}{n(n-1)\cdots(n-v_\ell+3)},
}
have variance of order at most~$1/n$, with constant depending on properties of~$H_\ell$. 

To see this, let~$H$ and~$x$ be graphs with vertex and edge sets $V(H), V(x), E(H), E(x)$, and let
${I}(H,x)$  be the set of all injections $i_{H, x}: V(H) \rightarrow V(x)$. For such an injection and any edge ${e}=(u,v)$ {of~$H$}, we use the notation $i_{H,x}({e}) = {\{}i_{H,x}(u), i_{H,x}(v){\}}$ and $E(i_{H,x}) = \cup_{{e} \in E(H)} \{i_{H,x}({e})\}$. Then the number of edge-preserving injections of ${I}(H,x)$ is 
\be{
t(H,x) =\sum_{i_{H,x} \in {I}(H,x)} \prod_{e \in E( H)} {\mathbbm 1} \bclr{i_{H,x}(e) \in E(x)},
}
and so for $s \in E({x})$, 
\bes{
\Delta_s t(H,x)
&=\sum_{i_{H,x} \in {I}(H,x)} \left\{ \prod_{e \in  E( H)}  {\mathbbm 1}  \bclr{i_{H,x}(e) \in E(x\s{s,1} )}
- \prod_{e \in H}  {\mathbbm 1}  \bclr{i_{H,x}(e) \in E(x\s{s,0} )} \right\} \\
&= \sum_{i_{H,x} \in I(H,x)}   {\mathbbm 1} \bclr{s \in E(i_{H,x}) } \prod_{e \in  E( H) \setminus {{i_{H,x}^{-1}(s)}} } {\mathbbm 1}  \bbclr{i_{H,x}(e) \in \bclr{ E(x) \setminus i_{H,x}(s)}}.
} 
Now, ${\abs{I(H,x)}}$ is of the order $n^{|v(H)|} $ and the number of such edge-preserving injections~$i_{H,x}$ which use the edge $s$, so that $s \in E(i_{H,x})$, is of the order $n^{|v(H)|-2} $. The variance of $ \Delta_s t(H,Z)$ involves of the order $n^{2(|v(H)|-2)} $ covariances, but many of these covariances will be 0 because edges in $Z$ are independent. Indeed, a non-zero covariance is obtained only when the two injections share at least one edge, which leaves only at most $|v(H)|-3$ vertices to choose from for the injection. Hence 
$$\Var \Delta_s t(H,Z) = O(n^{2 |v(H)|-5}).$$
Thus  taking the denominator in $ \Delta_{(1,2)} t_\ell(Z) $ into account yields $ \Var \Delta_{(1,2)} t_\ell(Z) = O(n^{-1})$. This argument has been explored for subgraph counts in Bernoulli graphs in many previous works, for example \cite{Rucinski1988}.
\end{remark}

Special test functions of interest are homomorphism densities
 $h(x)=t(H,x) n^{-\abs{V(H)}}$ for 
small nontrivial subgraphs $H$, since 
 convergence of subgraph 
 densities plays a key role in many other notions of convergence; see \cite{Borgs2008, Borgs2012},
 and, as mentioned previously, counts of small subgraphs are
 used in the analysis of networks as building blocks and summary statistics.
We have the following corollary, 
which follows easily from Theorems~\ref{thm:smallbetas} and~\ref{thm:negbetas} and Remark~\ref{rem:var} after noting that $\norm{\Delta h}=\bigo(n^{-2})$,  providing a rate of convergence for this case.

\begin{corollary}\label{cor:vhightemp}
If $\beta$ satisfies the hypotheses of Theorem~\ref{thm:smallbetas} or~\ref{thm:negbetas}, and for some graph~$H$, 
 $h(x)=t(H,x) n^{-\abs{V(H)}}$,  then
for $X\sim\ergm(\beta)$, $Z$ having the~\ER\ distribution with parameter~$\astar$,
\be{
\abs{\IE h(X)-\IE h(Z)}\leq \frac{c}{\sqrt{n}},
}
where $c$ is a computable constant depending on $\beta$ and $H$.
\end{corollary}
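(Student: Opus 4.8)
The plan is to substitute the specific test function $h(x)=t(H,x)\,n^{-\abs{V(H)}}$ into whichever of Theorem~\ref{thm:smallbetas} or Theorem~\ref{thm:negbetas} applies under the stated hypotheses, and then to track the powers of $n$ carried by each $n$-dependent factor in the resulting bound. In either case the bound has the shape
\[
\babs{\IE h(X)-\IE h(Z)} \leq \norm{\Delta h}\binom{n}{2} C(\beta) \sum_{\ell=2}^k \abs{\beta_\ell}\sqrt{\Var(\Delta_{12} t_\ell(Z))},
\]
where $C(\beta)=(4\gamma)^{-1}$ or $\bbclr{4(1-\tsfrac12\abs{\Phi}'(1))}^{-1}$ is a finite constant by the respective hypothesis. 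It then remains to estimate $\norm{\Delta h}$, the factor $\binom{n}{2}$, and the variance term separately.

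The key estimate is $\norm{\Delta h}=\bigo(n^{-2})$. Since $h=t(H,\cdot)\,n^{-\abs{V(H)}}$, I would bound $\norm{\Delta_s t(H,\cdot)}$ uniformly in $s$ using the explicit formula for $\Delta_s t(H,x)$ derived in Remark~\ref{rem:var}: each summand there is at most one, and a summand is nonzero only for injections $i_{H,x}$ that carry some edge of $H$ onto the fixed pair $s$. Counting these---choose which edge of $H$ maps to $s$, its orientation, and an injective placement of the remaining $\abs{V(H)}-2$ vertices among the other $n-2$ vertices---gives at most a constant depending on $H$ times $n^{\abs{V(H)}-2}$ such injections, so $\norm{\Delta_s t(H,\cdot)}=\bigo(n^{\abs{V(H)}-2})$ uniformly in $s$ and $x$. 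Multiplying by $n^{-\abs{V(H)}}$ yields $\norm{\Delta h}=\bigo(n^{-2})$.

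For the two remaining factors I would invoke Remark~\ref{rem:var} directly, which gives $\Var(\Delta_{12} t_\ell(Z))=\bigo(n^{-1})$ and hence $\sqrt{\Var(\Delta_{12} t_\ell(Z))}=\bigo(n^{-1/2})$, together with $\binom{n}{2}=\bigo(n^2)$. Combining the three estimates makes the right-hand side $\bigo(n^{-2})\cdot\bigo(n^2)\cdot\bigo(n^{-1/2})=\bigo(n^{-1/2})$, with the implied constant assembled from $C(\beta)$, the $\abs{\beta_\ell}$, the $H$-dependent constants above, and the fixed number $k$ of summands; this is a computable constant depending only on $\beta$ and $H$, as claimed. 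I do not expect a genuine obstacle, since the two theorems and Remark~\ref{rem:var} do the substantive work; the one point requiring care is that Remark~\ref{rem:var} controls a \emph{variance}, whereas $\norm{\Delta h}$ needs a uniform sup-norm bound on $\Delta_s t(H,x)$, so I would re-run the counting argument for the deterministic quantity rather than quote the variance estimate verbatim.
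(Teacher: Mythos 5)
Your proposal is correct and follows essentially the same route as the paper: the paper likewise derives the corollary by plugging $h(x)=t(H,x)n^{-\abs{V(H)}}$ into Theorem~\ref{thm:smallbetas} or~\ref{thm:negbetas}, invoking Remark~\ref{rem:var} for $\Var(\Delta_{12}t_\ell(Z))=\bigo(n^{-1})$, and noting $\norm{\Delta h}=\bigo(n^{-2})$, so the factors combine to $\bigo(n^{-2})\cdot\bigo(n^2)\cdot\bigo(n^{-1/2})=\bigo(n^{-1/2})$. Your explicit injection-counting argument for the uniform bound $\norm{\Delta_s t(H,\cdot)}=\bigo(n^{\abs{V(H)}-2})$ simply fills in a step the paper leaves implicit, and your closing caution (that the sup-norm bound is not the same as the variance bound) is exactly the right distinction to draw.
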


Explicit computation of the region of applicability of Theorem~\ref{thm:negbetas} is possible for small -- though frequently used (see \cite{Wasserman1994}) -- examples, as we now illustrate.

\begin{proposition}\label{prop:2star}
Let $X\sim\ergm(\beta)$ with $k=2$, $H_2$ a two-star, $\beta_1\in\IR$, and $\abs{\beta_2}<1$.
Then there is a unique $\astar\in[0,1]$ satisfying $\astar=\phi(\astar)$,
and for~$Z$ distributed \ER\ with parameter $\astar$, and any $h:\{0,1\}^{\binom{n}{2}}\to \IR$,
\be{
\abs{\IE h(X)-\IE h(Z)}
	\leq \norm{\Delta h}\binom{n}{2} \bbclr{4\bclr{1-\abs{\beta_2}}}^{-1}\,\frac{ \sqrt{8\astar(1-\astar)}\abs{ \beta_2}}{\sqrt{n-2}}.
}
\end{proposition}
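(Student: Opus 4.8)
The plan is to apply Theorem~\ref{thm:negbetas} in the case $k=2$ (rather than Theorem~\ref{thm:smallbetas}, since the hypothesis $\abs{\beta_2}<1$ permits negative $\beta_2$). This reduces the proposition to three tasks: verifying the hypothesis $\frac12\abs{\Phi}'(1)<1$, establishing existence and uniqueness of $\astar$, and computing the single variance term $\Var(\Delta_{(1,2)}t_2(Z))$.

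First I would specialize $\Phi$ and $\phi$. Since $H_1$ is a single edge ($e_1=1$) and $H_2$ is a two-star ($e_2=2$, $v_2=3$), we get $\Phi(a)=\beta_1+2\beta_2 a$ and $\abs{\Phi}(a)=\abs{\beta_1}+2\abs{\beta_2}a$, so $\abs{\Phi}'(1)=2\abs{\beta_2}$. The hypothesis $\frac12\abs{\Phi}'(1)<1$ of Theorem~\ref{thm:negbetas} is then exactly $\abs{\beta_2}<1$, which is assumed. For the fixed point, $\phi(a)=\frac12\bclr{1+\tanh(\beta_1+2\beta_2 a)}$ maps $[0,1]$ into $(0,1)$, and $\phi'(a)=\beta_2\,\mathrm{sech}^2(\beta_1+2\beta_2 a)$ satisfies $\abs{\phi'(a)}\le\abs{\beta_2}<1$; hence $\phi$ is a contraction of $[0,1]$ into itself, and the Banach fixed point theorem gives a unique $\astar\in[0,1]$ with $\astar=\phi(\astar)$ (equivalently, $\phi(a)-a$ is strictly decreasing from $\phi(0)>0$ to $\phi(1)-1<0$).

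The main computation is the variance. With $v_2=3$ the normalization is $t_2(x)=t(H_2,x)/n$, and I would difference the two-star count directly. Writing $d_j=\sum_{i\ne j}x_{ij}$ for the degree of vertex $j$, one has $t(H_2,x)=\sum_j d_j(d_j-1)$; toggling the coordinate $s=(1,2)$ changes only the contributions of vertices $1$ and $2$, giving
\be{
\Delta_{(1,2)}t(H_2,x)=2\sum_{w\ne 1,2}\bclr{x_{1w}+x_{2w}},\qquad\text{so}\qquad \Delta_{(1,2)}t_2(x)=\frac{2}{n}\sum_{w\ne 1,2}\bclr{x_{1w}+x_{2w}}.
}
(The factor $2$ records the two ways the edge $(1,2)$ can play the role of a star-edge; the same expression follows from the formula for $\Delta_s t(H,x)$ in Remark~\ref{rem:var}.) Under the \ER\ law with parameter~$\astar$, the $2(n-2)$ indicators $\{Z_{1w},Z_{2w}:w\ne 1,2\}$ correspond to distinct edges, hence are i.i.d.\ $\Be(\astar)$, so
\be{
\Var\bclr{\Delta_{(1,2)}t_2(Z)}=\frac{4}{n^2}\cdot 2(n-2)\,\astar(1-\astar)=\frac{8(n-2)}{n^2}\,\astar(1-\astar)\le \frac{8\,\astar(1-\astar)}{n-2},
}
the last step being the elementary inequality $(n-2)^2\le n^2$. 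Substituting $\sqrt{\Var(\Delta_{(1,2)}t_2(Z))}\le\sqrt{8\astar(1-\astar)}/\sqrt{n-2}$, the value $1-\frac12\abs{\Phi}'(1)=1-\abs{\beta_2}$, and the single term $\ell=2$ into Theorem~\ref{thm:negbetas} yields the stated bound.

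I expect the only delicate point to be the bookkeeping in the discrete derivative of the two-star count: correctly obtaining the factor $2$ and confirming that the $2(n-2)$ edge-indicators appearing are genuinely distinct, and therefore independent under the \ER\ measure. Everything else -- the hypothesis check, the contraction argument for $\astar$, and the final elementary bound $(n-2)/n^2\le 1/(n-2)$ -- is routine.
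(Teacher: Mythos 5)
Your proposal is correct and follows essentially the same route as the paper: apply Theorem~\ref{thm:negbetas}, note $\abs{\Phi}'(1)=2\abs{\beta_2}$ so its hypothesis is exactly $\abs{\beta_2}<1$, and compute $\Var(\Delta_{(1,2)}t_2(Z))=8(n-2)\astar(1-\astar)/n^2$ --- your degree-differencing identity $\Delta_{(1,2)}t(H_2,x)=2\sum_{w\ne 1,2}(x_{1w}+x_{2w})$ is the same computation as the paper's observation that $\tfrac12\Delta_{ij}t(H_2,Z)\eqlaw B_1+B_2$ with $B_1,B_2\sim\Bi(n-2,\astar)$ independent. Your explicit contraction argument for the uniqueness of $\astar$ and the final elementary step $\sqrt{n-2}/n\le 1/\sqrt{n-2}$ are details the paper leaves implicit, so they are welcome additions rather than deviations.
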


\begin{proposition}\label{prop:tri}
Let $X\sim\ergm(\beta)$ with $k=2$, $H_2$ be a triangle, $\beta_1\in\IR$, and $\abs{\beta_2}<1/3$.
Then there is a unique $\astar\in[0,1]$ satisfying $\astar=\phi(\astar)$,
and for~$Z$ distributed \ER\ with parameter $\astar$ and any $h:\{0,1\}^{\binom{n}{2}}\to \IR$,
\be{
\abs{\IE h(X)-\IE h(Z)}
	\leq \norm{\Delta h}\binom{n}{2} 
	\bbclr{4\bclr{1-3\abs{\beta_2}}}^{-1} \, \frac{ 6\astar\sqrt{1-(\astar)^2} \abs{\beta_2}}{\sqrt{n-2}},
}
\end{proposition}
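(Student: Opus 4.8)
The plan is to apply Theorem~\ref{thm:negbetas} with $k=2$ and $H_2$ the triangle, so $v_2=3$ and $e_2=3$. First I would record the relevant special functions. Since $\Phi(a)=\beta_1+3\beta_2 a^2$, one has $\abs{\Phi}(a)=\abs{\beta_1}+3\abs{\beta_2}a^2$ and hence $\abs{\Phi}'(1)=6\abs{\beta_2}$. Thus the hypothesis $\tfrac12\abs{\Phi}'(1)<1$ of Theorem~\ref{thm:negbetas} is precisely $\abs{\beta_2}<1/3$, and the prefactor $\bclr{4(1-\tfrac12\abs{\Phi}'(1))}^{-1}$ becomes $\bclr{4(1-3\abs{\beta_2})}^{-1}$, matching the form asserted in the proposition.

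Next I would establish existence and uniqueness of the fixed point. Because $\tanh$ takes values in $(-1,1)$, the map $\phi$ sends $[0,1]$ into $(0,1)\subseteq[0,1]$. Differentiating gives $\phi'(a)=\tfrac12\,\mathrm{sech}^2(\Phi(a))\,\Phi'(a)$ with $\Phi'(a)=6\beta_2 a$, so using $\mathrm{sech}^2\leq 1$ and $a\leq 1$ yields $\abs{\phi'(a)}\leq 3\abs{\beta_2}<1$ throughout $[0,1]$. Hence $\phi$ is a contraction of the complete space $[0,1]$ into itself, and the Banach fixed point theorem supplies the unique $\astar\in[0,1]$ with $\astar=\phi(\astar)$, as claimed.

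The main work is the variance computation. An edge-preserving injection from the triangle must land on three vertices spanning a triangle of $x$, and there are $3!=6$ such injections per triangle; flipping the coordinate $(1,2)$ changes the triangle count of $x$ by the number of common neighbours of $1$ and $2$, so $\Delta_{12} t(H_2,x)=6\sum_{w\neq 1,2}x_{1w}x_{2w}$. With the normalisation $t_2=t(H_2,\cdot)/n$ (here $v_2=3$ makes the denominator $n(n-1)\cdots(n-v_2+3)$ equal to $n$), this gives $\Delta_{12} t_2(Z)=\tfrac{6}{n}\sum_{w\neq 1,2}Z_{1w}Z_{2w}$. For $Z$ distributed \ER\ with parameter $\astar$, the summands $Z_{1w}Z_{2w}$ are, across distinct $w$, independent $\Be\bclr{(\astar)^2}$ variables, since they involve disjoint edges; therefore $\Var(\Delta_{12} t_2(Z))=\tfrac{36}{n^2}(n-2)(\astar)^2\bclr{1-(\astar)^2}$, so that $\sqrt{\Var(\Delta_{12} t_2(Z))}=\tfrac{6}{n}\sqrt{n-2}\,\astar\sqrt{1-(\astar)^2}$.

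Finally I would substitute into Theorem~\ref{thm:negbetas}, whose only surviving term ($\ell=2$) contributes $\abs{\beta_2}\sqrt{\Var(\Delta_{12} t_2(Z))}$, and bound $\sqrt{n-2}/n\leq 1/\sqrt{n-2}$ (valid since $n-2\leq n$) to reach the stated factor $\tfrac{6\astar\sqrt{1-(\astar)^2}\abs{\beta_2}}{\sqrt{n-2}}$. I expect the only delicate point to be the combinatorics of $\Delta_{12} t(H_2,\cdot)$, namely getting the constant $6$ right and, crucially, confirming that the summands are genuinely independent so that no covariance terms survive (this is what makes the exact $(\astar)^2(1-(\astar)^2)$ constant available, in contrast to the general $\bigo(n^{-1})$ estimate of Remark~\ref{rem:var}); the contraction argument and the remaining bookkeeping are routine.
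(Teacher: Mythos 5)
Your proof is correct and follows essentially the same route as the paper's: apply Theorem~\ref{thm:negbetas} after computing $\abs{\Phi}'(1)=6\abs{\beta_2}$, and identify $\tfrac16\Delta_{12}t(H_2,Z)\sim\Bi\bclr{n-2,(\astar)^2}$ to obtain $\Var(\Delta_{12}t(H_2,Z))=36(n-2)(\astar)^2(1-(\astar)^2)$. The only difference is that you spell out steps the paper leaves implicit (uniqueness of $\astar$ via the contraction bound $\abs{\phi'(a)}\leq 3\abs{\beta_2}<1$, the combinatorial identity $\Delta_{12}t(H_2,x)=6\sum_{w\neq 1,2}x_{1w}x_{2w}$ with independence across $w$, and the final estimate $\sqrt{n-2}/n\leq 1/\sqrt{n-2}$), all of which are correct.
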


For comparison, if $k=2$ and $H_2$ is a triangle, the recent result of \cite[Theorem~19]{Eldan2017} states that in the range $0\leq \abs{\beta_2}< 1/54$,
\be{
\abs{\IE h(X)-\IE h(Z)}
	\leq \norm{\Delta h}\frac{600 \exp\bclc{12 (\abs{\beta_1}+9\abs{\beta_2}) \vee 2}}{1-54\beta_2}\binom{n}{2}^{15/16}.
}
Our bound compares favourably to this one in rate, constant, and dependence on and range of $\beta_2$.

Before stating our next $\ergm$ result and having a further discussion, we give a quick toy numerical example. 

\smallskip

\noindent{\bf Florentine marriages.}
The Florentine Families Marriage data is a classical data set collected by \cite{Padgett1993}. 
It is an undirected network, which consists of the marriage ties among 16 families in 15th century Florence, Italy; each family is a vertex, and two vertices are connected by an edge if there is a marriage between them. There are $n=16$ vertices and $20$ edges in the network, and one of the vertices is isolated; the network has $50*2=100 $ subgraphs which are isomorphic to a 2-star. Figure \ref{florentine} shows this network; the vertices are labelled by the family names.

\begin{figure}[h!] 
\centering
\includegraphics[width=13cm]{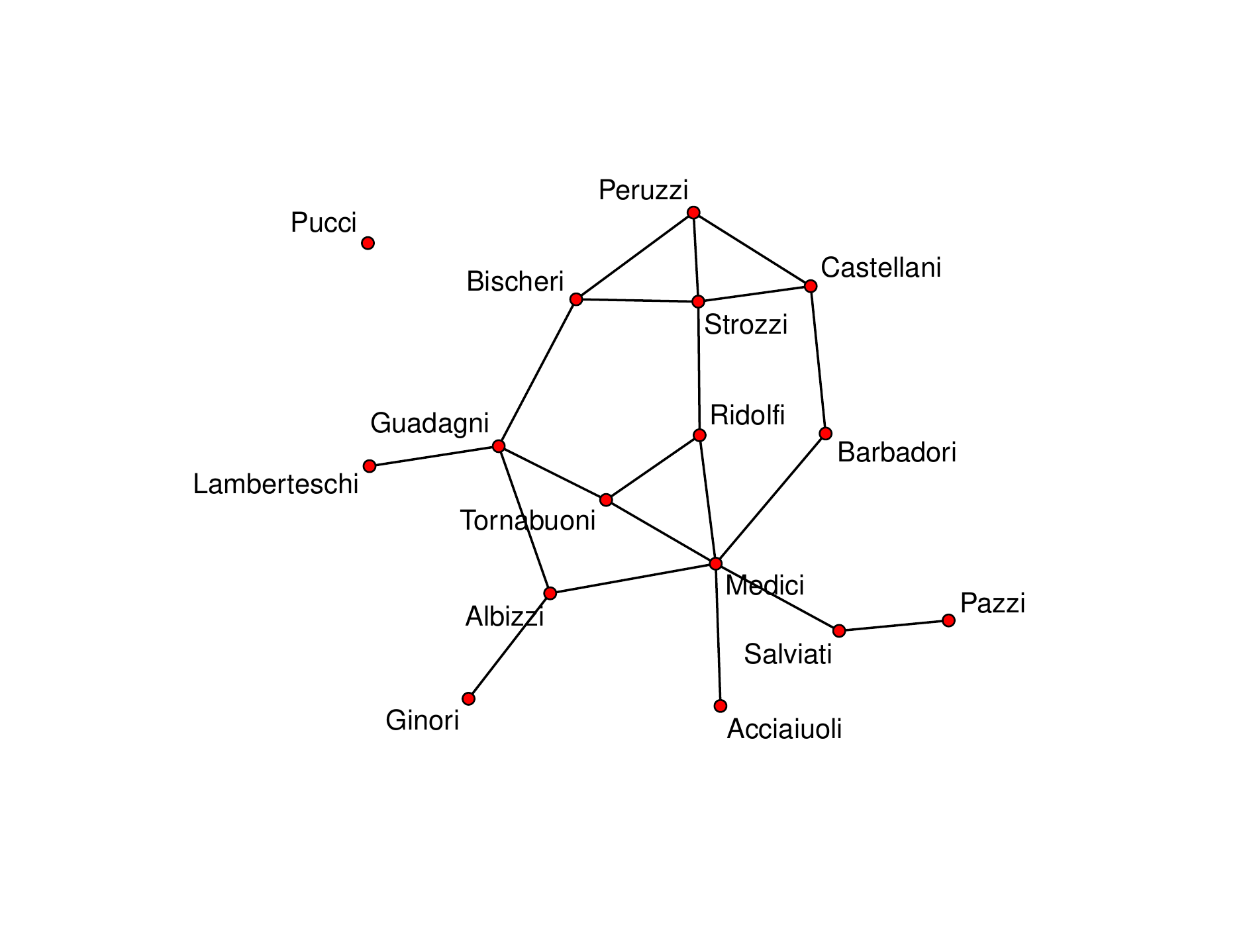}
\vspace{-2cm}
\caption{Marriage relations between Florentine families} \label{florentine}
\end{figure} 

Using the R package \emph{sna}\footnote{\url{https://cran.r-project.org/web/packages/sna/sna.pdf}},
we fit an exponential random graph model to these data with $k=2$ and $H_2$ a $2$-star.  
The fitted coefficients are $\beta_1= -1.6339$ and $\beta_2=0.0049*2=0.0098$, and so Proposition~\ref{prop:2star} applies. 
The function $\Phi$ is
\be{
\Phi(a) = -1.6339 + 0.0196a,
}
and the unique solution to $\phi(a)=a$ is $a^* = 0.036743$.
Thus the upper bound of Proposition~\ref{prop:2star} between the fitted model and an \ER\ graph with parameter $a^*$ is approximately
\be{
\binom{16}{2}\frac{0.036743\sqrt{8(0.0098)(1-0.036743)}}{\bclr{4(1-(0.0098)}\sqrt{14}}={0.0817595}.
}

\medskip

Our next result is a generalisation of Theorem~\ref{thm:smallbetas}, which applies to the whole high temperature regime, meaning  there is a unique minimiser to~\eq{eq:posbvar}.
The result is more general, but the constants are not explicit.
\begin{theorem}\label{thm:hightemp}
For given $\beta_1\in\IR$, $\beta_\ell>0$, $\ell=2,\ldots,k$, assume there is a unique $\astar\in[0,1]$ that satisfies both $\astar=\phi(\astar)$ and $\phi'(\astar)<1$.
Then for $X\sim\ergm(\beta)$, $Z$ having the~\ER\ distribution with parameter~$\astar$, 
and any function $h:\{0,1\}^{\binom{n}{2}}\to \IR$, there is a constant $C$ depending only on $\beta$ and $H_1,\ldots, H_\ell$, such that
\be{
\abs{\IE h(X)-\IE h(Z)}
        \leq   C \norm{\Delta h} n^{3/2}.
}
\end{theorem}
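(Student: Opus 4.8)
The plan is to run the argument behind Theorem~\ref{thm:key1} but to halt at the intermediate estimate provided by Lemmas~\ref{lem:genapp1} and~\ref{lem:delf}, rather than to invoke the clean bound~\eqref{eq:1.1}. The reason is that across the entire high temperature regime the one-step Dobrushin-Shlosman contraction~\eqref{eq:glaubcon} can fail at configurations far from the fixed point, so there is no single usable $\rho$. Writing $N=\binom{n}{2}$ and $L(x)=\sum_{\ell=1}^k\beta_\ell t_\ell(x)$, Lemma~\ref{lem:genapp1} expresses the difference through the solution $f$ of the Poisson equation for the Glauber generator and yields an intermediate bound of the form
\be{
\babs{\IE h(X)-\IE h(Z)}\leq \frac{\norm{\Delta f}}{4N}\sum_{s=1}^N \IE\babs{\Delta_s L(Z)-\IE\Delta_s L(Z)},
}
in which the sum is a pure mean-field discrepancy under the product law $\law(Z)$, while $\norm{\Delta f}$ must still be controlled through the dynamics.

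For the discrepancy I would use $\IE\abs{W-\IE W}\leq\sqrt{\Var W}$ coordinatewise together with the subgraph-count computation of Remark~\ref{rem:var}: as $\astar$ and the $H_\ell$ are fixed, $\Var(\Delta_s L(Z))=\bigo(1/n)$ uniformly in $s$, so
\be{
\sum_{s=1}^N \IE\babs{\Delta_s L(Z)-\IE\Delta_s L(Z)}\leq \sum_{s=1}^N \sqrt{\Var(\Delta_s L(Z))}=\bigo\bclr{n^{2}\cdot n^{-1/2}}=\bigo(n^{3/2}),
}
with constant depending only on $\beta$ and $H_1,\dots,H_k$. Since the prefactor is $\norm{\Delta f}/(4N)$, the whole theorem reduces to the single estimate $\norm{\Delta f}=\bigo(N)\,\norm{\Delta h}$: the factors of $N$ then cancel and the mean-field estimate yields the claimed $C\norm{\Delta h}\,n^{3/2}$.

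The crux is therefore the bound on $\norm{\Delta f}$. Representing $f$ through the Glauber semigroup and coupling two chains $U(\cdot),V(\cdot)$ started at $U(0)=x\s{s,1}$, $V(0)=x\s{s,0}$, the estimate of Lemma~\ref{lem:delf} takes the shape
\be{
\babs{\Delta_s f(x)}\leq \norm{\Delta h}\sum_{m\geq0}\IE\,\dham\bclr{U(m),V(m)},
}
so it suffices to show $\sum_{m\geq0}\IE\dham(U(m),V(m))=\bigo(N)$ for Hamming-neighbouring starts. Near the product measure indexed by $\astar$ the effective per-coordinate influence of the update equals
\be{
\phi'(\astar)=2\astar(1-\astar)\,\Phi'(\astar)<1,
}
which furnishes a genuine local contraction at rate $\bclr{1-\phi'(\astar)}/N$, whence the tail of the sum is geometric of total size $\bigo(N)$. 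To reach and remain in this contracting region I would invoke the fast mixing of the ferromagnetic ($\beta_\ell>0$) ERGM Glauber dynamics established in~\cite{Bhamidi2011}, which holds precisely when $\astar$ is the unique stable fixed point.

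The main obstacle is controlling $\IE\dham(U(m),V(m))$ while the coupled chains are still far from $\astar$, where the one-step Hamming distance need not contract and could a priori grow. One must show that these transient excursions contribute only $\bigo(N)$ to the sum, which requires the global relaxation estimate of~\cite{Bhamidi2011} together with the concentration of the dynamics on the product measure indexed by $\astar$. This gluing of a quantitative local contraction from $\phi'(\astar)<1$ with a non-constructive global mixing bound is exactly where the hypotheses $\beta_\ell>0$ and uniqueness of $\astar$ are indispensable, and it is the reason the constant $C$ is not explicit: it inherits the implicit constants of the Bhamidi-Bresler-Sly mixing bound, in contrast to the explicit $\gamma$ produced by the global condition~\eqref{eq:gdcond} of Theorem~\ref{thm:smallbetas}.
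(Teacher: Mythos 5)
Your skeleton matches the paper's: bypass Theorem~\ref{thm:key}, combine Lemmas~\ref{lem:genapp1} and~\ref{lem:delf} directly, extract the $\bigo(n^{3/2})$ mean-field discrepancy via Remark~\ref{rem:var}, and control the Stein solution through a local contraction governed by $\phi'(\astar)<1$ together with the global mixing bound of \cite{Bhamidi2011}. But there is a genuine gap at exactly the point you call the crux: you reduce the theorem to the \emph{uniform} estimate $\norm{\Delta f}=\bigo(N)\norm{\Delta h}$, i.e.\ $\sup_x\sum_{m\geq0}\IE\,\dham\bclr{U\t{x,s}(m),V\t{x,s}(m)}=\bigo(N)$, and this uniform statement is not obtainable by the tools at hand (which is precisely why the paper never attempts it). In the full high temperature regime there are starting configurations $x$ outside the contracting region; from such $x$ the one-step estimate only gives $\IE\,\dham(U(m+1),V(m+1))\leq(1+C/N)\,\IE\,\dham(U(m),V(m))$, so over a burn-in of length $t_{\rm mix}=\Theta(n^2\log n)$ the expected Hamming distance can grow to order $N$, and global mixing alone then yields only $\sum_{m}\IE\,\dham=\bigo(N t_{\rm mix})$ uniformly in $x$. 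Plugging that into your display gives a final bound of order $n^{7/2}\log n$, not $n^{3/2}$. Your closing sentence asserts that the transient excursions contribute only $\bigo(N)$, but no mechanism is offered, and for bad starting points none exists.

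The paper's resolution is to \emph{not} take a supremum over $x$: Lemma~\ref{lem:genapp1} leaves $\abs{\Delta_{ij}f_h(Z)}$ inside an expectation against the product law $\law(Z)$, and the proof establishes a dichotomy: $\abs{\Delta_{ij}f_h(x)}\leq\norm{\Delta h}\binom{n}{2}\bclr{(1-\alpha)^{-1}+\lito(1)}$ for $x$ in the good set $\cE_\mu$ where~\eq{eq:subctbd} holds, versus the much weaker $\abs{\Delta_{ij}f_h(x)}\leq\norm{\Delta h}\binom{n}{2}t_n$ with $t_n=cn^2\log(n)^2$ for $x\notin\cE_\mu$. Even the good-set bound needs more than local contraction: one must show the coupled chains stay in the contracting region for all $t_n$ steps except with probability $\bigo(t_n^4e^{-d''n})$ (this is~\eq{eq:cgs}, extracted from \cite{Bhamidi2011}), and handle $m\geq t_n$ by the coupling decay $2^{-\lfloor m/t_{\rm mix}\rfloor}$ coming from the mixing time. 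Finally, the bad-set contribution $t_n\sum_{ij}\IP(Z\notin\cE_\mu)$ is killed by showing, via high-moment Markov bounds in the style of \cite{Rucinski1988}, that $\IP(Z\notin\cE_\mu)=\bigo(n^{-\ell})$ for arbitrarily large~$\ell$. This conditioning-on-the-good-set structure is what your reduction to a single sup-norm bound on $\Delta f$ discards, and without it the argument cannot close.
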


Now, to compare our ERGM results to those existing, we
already mentioned the result of \cite{Chatterjee2013}, which shows 
large ERGMs are close to finite mixtures of \ER\ random graphs. Theorems~\ref{thm:smallbetas} and~\ref{thm:negbetas} explicitly quantify this in a subset of the high temperature regime,
while Theorem~\ref{thm:hightemp} gives rates over the whole high temperature regime.
Our results rely on a close analysis
of the ERGM Glauber dynamics, following \cite{Bhamidi2011}, where mixing times are studied. In particular, they find that in the high temperature regime, the chain mixes at the same order ($n^2 \log(n)$) as the \ER\ case. 
The region described in our Theorem~\ref{thm:smallbetas} is a subset of the high temperature regime
where the Glauber dynamics are contracting.  
For parameter values where there is more than one \emph{local} maximum,  \cite{Bhamidi2011} show that
the chain takes exponential (in~$n$) time to mix. One approach to adapt our results to this case is to study the Glauber dynamics of the ERGM conditioned to be in a region ``close" to a local maximum where the dynamics are contracting; a suitable region can be read from Lemma~\ref{lem:gdsetbd}.
We also mention \cite{Radin2013}, where the variational problem~\eq{eq:posbvar} is analysed in the case when $k=2$ and $\beta_2>0$ or where $\beta_2<0$ in the case where $H_2$ is a star; even this basic case is important to understand, though not straightforward to analyse. Further analysis of the variational problem for $k=3$ is given in \cite{Yin2013}.

Closest to our $\ergm$ results is 
\cite{Eldan2017} (already mentioned in an example above), 
and in particular they showed in their Theorem~18 
that for positive $\beta_\ell$ for $\ell=2,\ldots,k$ and
assuming there is a unique solution~$\astar$ to~\eq{eq:posbvarcrit} and with $\phi'(\astar)<1$ (the high temperature regime) then 
for $X\sim\ergm(\beta)$, $Z$ having the~\ER\ distribution with parameter~$\astar$, 
and any $h:\{0,1\}^{\binom{n}{2}}\to \IR$,
\be{
\abs{\IE h(X)-\IE h(Z)}
	\leq \norm{\Delta h} \bigo(n^{2(1-\theta)})
}
for some $0<\theta<1/16$, and the constant can be made numerically explicit in fixed examples. The result is more general than Theorem~\ref{thm:smallbetas}, as it covers the entire high temperature regime, but the rate is not as good. Theorem~\ref{thm:hightemp} covers the high temperature regime with a better rate than \cite[Theorem~18]{Eldan2017}, but without explicit constants (and really no hope of extracting them from our proof). 
%

The remainder of this paper is organized as follows. We finish the introduction with a summary discussion. In Section~\ref{sec:stn} we state and prove our main
general approximation result, Theorem~\ref{thm:key}, and show how it implies Theorem~\ref{thm:key1}. In Section~\ref{sec:ergm}, we state and prove our ERGM results, Theorems~\ref{thm:smallbetas} and~\ref{thm:negbetas}, and Propositions~\ref{prop:2star} and~\ref{prop:tri}, and Theorem~\ref{thm:hightemp}.

\subsection{Discussion}

Our main result gives an easy to compute upper bound between a vector of Bernoulli variables that are the stationary distribution of a fast mixing Markov chain and an independent vector. In typical applications, such as Ising models and ERGMs,  the result gives approximation with rates of convergence for expectations of certain statistics of the system. Our work further supports the notion, appearing in other contexts, that stationary distributions of fast mixing Markov chains should have properties similar to independent systems. Our bounds are simple and explicit, and improve upon others given in \cite{Eldan2017a, Eldan2017}, which in turn are derived from a ``non-linear" large deviations framework developed in \cite{Eldan2016}. That framework uses a notion of complexity referred to as \emph{Gaussian width}. A similar framework with a different notion of complexity is given in \cite{Chatterjee2017}; see also \cite{Chatterjee2016}. It would be interesting to make the connection between these complexity measures and fast mixing explicit, which would complete more of the picture around concentration, mixing, and distance to independence. 

Though our results only apply in high temperature regimes, note that  in low temperature regimes, fast mixing may occur for the measure conditioned to be in a ``good" neighbourhood. For example, in the $\ergm$, we would condition on being in the set given by~\eq{eq:subctbd} for small enough $\eps$ such that~\eq{eq:contractbd} is satisfied. The difficulty is to ensure that the conditional Glauber dynamics are well-behaved at the boundary.

\begin{remark}
After the second author spoke about this work at the MIT probability seminar on 23 October 2017, we became aware that Guy Bresler and Dheeraj  Nagaraj have been  working independently on a similar general approximation result, with applications to Ising models; see \cite{Bresler2017}.
In particular, they prove Lemmas~\ref{lem:genapp} and~\ref{lem:genapp1}  using roughly the same ideas as ours, and explore their use in Ising models, going beyond the illustrative Section~\ref{sec:ising}. 
Results for ERGMs, which are our main application,  are not developed in their work. For added transparency we have co-ordinated the submissions of our two papers. 
\end{remark}

\section{General approximation result}\label{sec:stn}

In this section we state and prove our general approximation result,
and then show how it implies Theorem~\ref{thm:key1}.
As previously mentioned, we work in the setup of \cite{Hayes2006} and  \cite{Dyer2009}. Define the  $N\times N$ \emph{influence matrix} $\hat R$ for the Glauber dynamics of $\law(X)$ by
\be{
\hat R_{r s}:=\max_{x\in\{0,1\}^N}\bbabs{q_X\bclr{\clr{x\s{s,1}}\s{r,1}|x\s{s,1}}-q_X\bclr{\clr{x\s{s,0}}\s{r,1}|x\s{s,0}}}.
}
Then $\hat R_{r s}$ is the maximum amount that the conditional distribution of the $r^{th}$ coordinate of $x$ can change due to a change in the $s^{th}$ coordinate of $x$. For $1\leq p\leq \infty$, let $\norm{\cdot}_p$ be the $p$-norm on $\IR^N$, and define the matrix operator $p$-norm
\be{
\norm{A}_p:=\sup_{v\not=0} \frac{\norm{A v}_p}{\norm{v}_p};
}
as is typical, the notation $\norm{\cdot}_p$ has different meaning depending on whether the argument is a matrix or vector, but this should not cause confusion.

\begin{theorem}\label{thm:key}
Let $X,Y\in\{0,1\}^N$ be random vectors, $h:\{0,1\}^N\to\IR$, and assume the continuous time Glauber dynamics for~$\law(X)$ is irreducible.   For $s\in[N]$, set $c_s:=\norm{\Delta_s h}$ and 
$v_s(y):=\abs{q_X(y\s{s,1}|y)- q_Y(y\s{s,1}|y)}$, and $c:=(c_1,\ldots, c_N)$ and $v(Y):=(v_1(Y),\ldots, v_N(Y))$.  
Assume there is an $N\times N$ 
matrix $R$, such that for all $r,s\in[N]$, and some $1\leq p \leq \infty$ and $\eps=\eps_p>0$,
\ben{\label{dep}
\hat R_{rs}\leq R_{rs},  \, \mbox{ and } \, \,  \norm{R}_p\leq 1-\eps<1;
}
where~$\hat R$ is the influence matrix for the Glauber dynamics of~$\law(X)$.
Then for $q:=p/(p-1)$,
\be{
\babs{\IE h(X)-\IE h(Y)}  
\leq  \eps^{-1} \norm{c}_{q}  \, \IE \norm{v(Y)}_p.
}
\end{theorem}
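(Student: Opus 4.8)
The plan is to run a Stein/Poisson-equation argument driven by the Glauber generator of $\law(X)$, and then to control the resulting Stein factors $\norm{\Delta_s f}$ through the influence matrix by a coupling. First I would introduce the continuous-time generator $\cA$ of the Glauber dynamics of $\law(X)$,
\[
\cA g(x) = \sum_{s=1}^N \bclr{q_X(x\s{s,1}|x) g(x\s{s,1}) + q_X(x\s{s,0}|x) g(x\s{s,0}) - g(x)},
\]
with semigroup $P_t$. Irreducibility on the finite state space makes the chain ergodic with stationary law $\law(X)$, so $P_t h(x)\to\IE h(X)$ exponentially fast, and hence $f(x):=-\int_0^\infty \bclr{P_t h(x)-\IE h(X)}\,dt$ is well defined and solves the Poisson equation $\cA f = h-\IE h(X)$. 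Evaluating pointwise at $Y$ then gives $\IE[\cA f(Y)] = \IE h(Y)-\IE h(X)$.

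Second, I would replace $\cA$ by the generator $\cA_Y$ of the Glauber dynamics of $\law(Y)$, which is reversible with respect to $\law(Y)$ so that $\IE[\cA_Y f(Y)]=0$. Since both generators only ever compare $g(\cdot\s{s,1})$ with $g(\cdot\s{s,0})$ and the update probabilities to $0$ and to $1$ sum to one, their difference collapses to a single-coordinate discrete gradient,
\[
\IE h(Y)-\IE h(X) = \IE\bclr{(\cA-\cA_Y)f(Y)} = \IE \sum_{s=1}^N \bclr{q_X(Y\s{s,1}|Y)-q_Y(Y\s{s,1}|Y)}\,\Delta_s f(Y).
\]
Recognising $v_s(Y)=\babs{q_X(Y\s{s,1}|Y)-q_Y(Y\s{s,1}|Y)}$, bounding $\abs{\Delta_s f(Y)}\leq\norm{\Delta_s f}$, and applying H\"older's inequality with conjugate exponents $p,q$ yields
\[
\babs{\IE h(X)-\IE h(Y)} \leq \norm{a}_q\,\IE\norm{v(Y)}_p, \qquad a:=(\norm{\Delta_1 f},\ldots,\norm{\Delta_N f}),
\]
so it remains to prove $\norm{a}_q\leq\eps^{-1}\norm{c}_q$.

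Third, and this is where the influence matrix enters, I would bound each $\norm{\Delta_s f}$ by a coupling. Writing $\Delta_s f(x)=-\int_0^\infty \Delta_s P_t h(x)\,dt$, I couple two copies of the dynamics started at $x\s{s,1}$ and $x\s{s,0}$, using at each coordinate update the maximal coupling of the two conditional laws. Letting $m_r(t)$ be the probability that the copies disagree in coordinate $r$ at time $t$, a telescoping Lipschitz estimate gives $\babs{\Delta_s P_t h(x)}\leq \sum_r c_r m_r(t)=c^\top m(t)$, while the definition of $\hat R$ bounds the rate at which a discrepancy is recreated at the updated coordinate $r$ by $\sum_u \hat R_{ru}m_u$. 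This produces the componentwise differential inequality $\frac{d}{dt}m(t)\leq(-I+\hat R)m(t)$ with $m(0)=\mathrm{e}_s$; since $\hat R$ has nonnegative entries and $\hat R\leq R$ entrywise, comparison gives $m(t)\leq e^{(-I+R)t}\mathrm{e}_s$. Integrating and using $\norm{R}_p<1$ (so $-I+R$ is stable and $\int_0^\infty e^{(-I+R)t}\,dt=(I-R)^{-1}$) yields, uniformly in $x$, $\norm{\Delta_s f}\leq\bbclr{(I-R^\top)^{-1}c}_s$. As all entries are nonnegative, $\norm{a}_q\leq\norm{(I-R^\top)^{-1}}_q\norm{c}_q$, and expanding $(I-R^\top)^{-1}=\sum_{k\geq0}(R^\top)^k$ together with the duality $\norm{R^\top}_q=\norm{R}_p\leq 1-\eps$ gives $\norm{(I-R^\top)^{-1}}_q\leq\sum_k(1-\eps)^k=\eps^{-1}$. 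Combining the three displays proves the theorem.

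The main obstacle is the third step: arranging the coupling so that the discrepancy vector $m(t)$ genuinely obeys the componentwise differential inequality governed by $\hat R$. The delicate points are (i) the telescoping bound, which requires passing from two configurations differing on a set of coordinates to a single-flip path and invoking the max-over-$x$ definition of $\hat R_{ru}$ at each step, and (ii) justifying the differential (rather than one-step) comparison in continuous time, including the nonnegativity needed for the monotonicity $\hat R\leq R\Rightarrow e^{(-I+\hat R)t}\leq e^{(-I+R)t}$. The duality $\norm{A^\top}_q=\norm{A}_p$ and the stability of $-I+R$ are routine once $\norm{R}_p<1$ is in hand.
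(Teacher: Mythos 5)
Your proof is correct, and its architecture is the same as the paper's: solve the Stein (Poisson) equation for the Glauber generator of $\law(X)$ (the paper's Lemma~\ref{lem:genapp}), compare generators at $Y$ using $\IE\cA_Y f(Y)=0$ so that the difference collapses to $\sum_s\bclr{q_X(Y\s{s,1}|Y)-q_Y(Y\s{s,1}|Y)}\Delta_s f(Y)$ (Lemma~\ref{lem:genapp1}), and control the Stein factors $\norm{\Delta_s f}$ by a coupling whose disagreement probabilities are governed by the influence matrix (Lemma~\ref{lem:delf}). The one genuine divergence is how that last estimate is executed. The paper passes to the embedded discrete-time jump chain and imports the path-coupling bound of \cite{Dyer2009}: with $B:=(1-\tfrac1N)I+\tfrac1N R$ there is a coupling with $\IP\bclr{U\t{x,s}_r(m)\neq V\t{x,s}_r(m)}\leq (B^m)_{rs}$, after which H\"older, submultiplicativity, and $\norm{B}_p\leq 1-\eps/N$ give a geometric series summing to $N/\eps$. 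You instead stay in continuous time (your generator is the paper's sped up by the factor $N$, which is immaterial), derive the disagreement inequality $m'(t)\leq(-I+\hat R)\,m(t)$ from scratch via maximal-coupling updates and the single-flip telescoping built into the definition of $\hat R$, and integrate $e^{(-I+R)t}$ to get $\norm{\Delta_s f}\leq\bclr{(I-R^\top)^{-1}c}_s$, finishing with the duality $\norm{R^\top}_q=\norm{R}_p$. The two computations agree up to the time change, since $\sum_{m\geq0}B^m=N(I-R)^{-1}$ while $\int_0^\infty e^{(-I+R)t}\,dt=(I-R)^{-1}$. What your route buys is self-containedness (no appeal to an external path-coupling formula); what it costs is the need to justify the ODE comparison for the quasi-monotone system, which is legitimate here: the coupled process is a finite-state chain, so $m$ is smooth; $-I+\hat R$ has nonnegative off-diagonal entries, so its semigroup is entrywise nonnegative and the comparison principle applies; and entrywise monotonicity of $e^{At}$ in $A\geq 0$ handles $\hat R\leq R$. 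The paper's discrete-time route sidesteps both this and the transpose duality, the latter by keeping the bilinear form $c^\top B^m v(Y)$ intact and bounding $\norm{B^m v(Y)}_p\leq\norm{B}_p^m\norm{v(Y)}_p$ directly. I see no gaps in your argument.
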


\begin{remark}
A matrix $R$ such that \eqref{dep} holds is also called a {\it dependency matrix}.
The condition $\norm{R}_p\leq 1-\eps<1$ with $p=\infty$ (maximum of row sums) is the \emph{Dobrushin} condition from \cite{Dobrushin1970}, and with $p=1$ (maximum of column sums) is the \emph{Dobrushin-Shlosman} condition  from \cite{Dobrushin1985}.
Variations on the bound may be obtained for more general matrix norms, following the approach of \cite{Dyer2009}, but we pursue these generalisations and their implications elsewhere.
\end{remark}

The proof of the theorem uses Stein's method, due to \cite{Stein1972}, \cite{Stein1986};  see \cite{BarbourChen2005}, \cite{Chen2011}, \cite{Ley2017}, \cite{Ross2011} for various introductions; and we follow Barbour's generator method from \cite{Barbour1988, Barbour1990}. Our approach is
related to ideas from \cite{Reinert2005} for functions of independent variables, and \cite{Eichelsbacher2008} for one-dimensional Gibbs measures. 
The generator approach requires bounds on certain mixing quantities, and these are obtained from the bound on the norm of the influence matrix, following \cite{Hayes2006},
\cite{Dyer2009}, and \cite{Bubley1997}. 
We also remark that the key
 Lemma~\ref{lem:delf} below is closely related to ideas in \cite[Section~4.2]{Chatterjee2005a}, which gives fast mixing criteria for concentration of Lipschitz functions of stationary distributions.

The underlying idea to the generator approach is that a distribution $\mu$ is characterised as the stationary distribution of a Markov process which has generator $\cA_\mu$. The generator $\cA$ appears in the so-called {\it Stein equation} 
\be{
\cA_\mu f(x) = h(x) -  {\mathbbm{E}} h(X)
}
where $X$ has distribution $\mu$ and $h$ is a test function. Two distributions can then be compared through the comparison of the corresponding generators;  if $Y$ has distribution $\nu$ then 
\be{
{\mathbbm{E}} h(Y ) -  {\mathbbm{E}} h(X) = {\mathbbm{E}} \cA_\mu f(Y) 
=  {\mathbbm{E}} \cA_\mu f(Y)  -  {\mathbbm{E}} \cA_\nu f(Y) .
}
Here the last equality follows from ${\mathbbm{E}} \cA_\nu f(Y)=0$ under some regularity assumptions.

{A Markov process which has stationary distribution $\mu$} can sometimes be obtained through a sequence of exchangeable operations on the configurations which the chain can take on; see for example  \cite{Rinott1997} and \cite{Reinert2005}.
In this paper the exchangeable construction is provided by the Glauber dynamics with an exponential clock for the transition times. 

The first lemma we need in order to prove the theorem is by now standard in Stein's method; it provides a solution of the Stein equation induced by the generator of the Glauber dynamics.
Define the continuous time Glauber dynamics with exponential rate~1 holding times,
having generator~$\cA:=\cA_X$ given by
\ben{\label{eq:gibbgen}
\cA f(x)=\frac{1}{N}\sum_{s\in[N]}\bbcls{q(x\s{s,1}|x)\Delta_{s} f(x)+  \bclr{f(x\s{s,0})-f(x)}}.
}

\begin{lemma}\label{lem:genapp}
Let $X\in\{0,1\}^N$ be a random vector and $(X(t))_{t\geq0}$ be the continuous time Glauber dynamics Markov chain for~$\law(X)$
with generator~\eq{eq:gibbgen}. 
If the Markov chain is irreducible, then for any $h:\{0,1\}^N\to \IR$, the function
 \be{
f_h(x):=-\int_{0}^\infty \IE\cls{h(X(t))-\IE h(X)|X(0)=x} dt
} 
is well-defined and satisfies $\cA f_h(x)= h(x)-\IE h(X)$. 
\end{lemma}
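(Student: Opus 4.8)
The plan is to run the standard generator (Stein) argument: realise $f_h$ as the integral of the transition semigroup against the test function, use exponential ergodicity of the finite-state chain to guarantee the integral converges, and then apply $\cA$ under the integral sign and telescope by the fundamental theorem of calculus. First I would set up notation: write $P_t h(x):=\IE\cls{h(X(t))\mid X(0)=x}$ for the semigroup and $\pi(h):=\IE h(X)$, where $\pi=\law(X)$ is the stationary distribution. The continuous-time chain inherits reversibility with respect to $\law(X)$ from the discrete Glauber chain, so $\pi=\law(X)$ is indeed stationary and, by irreducibility on a finite state space, unique. Because $\{0,1\}^N$ is finite, $\cA$ is a $2^N\times 2^N$ matrix, $P_t=e^{t\cA}$, and the Kolmogorov equations give $\frac{d}{dt}P_t h=\cA P_t h=P_t\cA h$, with $P_0 h=h$ and $\cA$ annihilating constant functions.

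Next, for well-definedness I would invoke exponential ergodicity. Since the chain is irreducible on a finite state space, $0$ is a simple eigenvalue of $\cA$ (with constant eigenfunction) and every other eigenvalue has strictly negative real part; hence there exist constants $C,\lambda>0$, not depending on $x$, with $\babs{P_t h(x)-\pi(h)}\leq C\norm{h}\,e^{-\lambda t}$ for all $t\geq0$. This bound shows the integrand $t\mapsto P_t h(x)-\pi(h)$ is absolutely integrable on $[0,\infty)$, so that $f_h(x)=-\int_0^\infty\bclr{P_t h(x)-\pi(h)}\,dt$ is finite for every $x\in\{0,1\}^N$.

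For the Stein equation itself I would compute directly. Because $\cA$ is a bounded (finite-dimensional) linear operator and the integrand decays exponentially, $\cA$ may be interchanged with the integral; using that $\cA$ kills the constant $\pi(h)$ together with $\cA P_t h=\frac{d}{dt}P_t h$, one gets
\be{
\cA f_h(x)=-\int_0^\infty \cA P_t h(x)\,dt=-\int_0^\infty \frac{d}{dt}P_t h(x)\,dt=-\bclr{\lim_{t\to\infty}P_t h(x)-P_0 h(x)}=h(x)-\pi(h),
}
which is exactly $h(x)-\IE h(X)$, as claimed.

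The only substantive input, and hence the point I would flag as the main obstacle, is the exponential convergence $P_t h(x)\to\pi(h)$: every other step is a formal manipulation that is valid precisely because the state space is finite (so $\cA$ is a matrix, $P_t=e^{t\cA}$, and differentiation under the integral is unproblematic). That convergence follows from irreducibility plus finiteness via a Perron--Frobenius / spectral-gap argument for the generator, and it is what simultaneously guarantees that the defining integral converges absolutely and that the boundary evaluation at $t=\infty$ yields $\pi(h)$.
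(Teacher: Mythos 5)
Your proposal is correct and is precisely the argument the paper has in mind: the paper in fact gives no proof of this lemma, describing it as ``by now standard in Stein's method'' and deferring to Barbour's generator approach \cite{Barbour1988, Barbour1990}. The details you supply---stationarity and uniqueness from reversibility and irreducibility on the finite state space, exponential ergodicity via the spectral gap of $\cA$ to make $f_h$ well-defined, and then commuting $\cA$ with the integral and applying the fundamental theorem of calculus to the semigroup---constitute exactly that standard proof.
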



The next lemma gives an intermediate bound on the quantity we want to approximate, through comparing the generators of the Glauber dynamics \eqref{eq:gibbgen} for the two processes under investigation.
\begin{lemma}\label{lem:genapp1}
Let $X,Y\in\{0,1\}^N$ be random vectors, $h:\{0,1\}^N\to\IR$, $f_h$ be as in Lemma~\ref{lem:genapp}. 
Then
\bes{
\babs{\IE h(X)-\IE h(Y)} &\leq  \frac{1}{N}\sum_{s\in[N]}\IE \bbcls{\babs{q_X(Y\s{s,1}|Y)- q_Y(Y\s{s,1}|Y)}\babs{\Delta_{s} f_h(Y)}}.
}
\end{lemma}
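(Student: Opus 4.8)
The plan is to run Barbour's generator comparison. By Lemma~\ref{lem:genapp}, the function $f_h$ solves the Stein equation $\cA_X f_h(x)=h(x)-\IE h(X)$, where $\cA_X$ is the generator~\eqref{eq:gibbgen} of the Glauber dynamics of $\law(X)$. Taking expectations over $Y$ gives
\be{
\IE h(Y)-\IE h(X)=\IE \cA_X f_h(Y).
}
The first move is to bring in, at no cost, the generator $\cA_Y$ of the Glauber dynamics of $\law(Y)$ (the same expression~\eqref{eq:gibbgen} with $q_X$ replaced by $q_Y$). Since $\law(Y)$ is stationary --- indeed reversible, exactly as noted for $\law(X)$ in the introduction --- for its own Glauber dynamics, we have $\IE \cA_Y g(Y)=0$ for every $g:\{0,1\}^N\to\IR$. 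Applying this with $g=f_h$ yields
\be{
\IE h(X)-\IE h(Y)=\IE\bcls{\cA_Y f_h(Y)-\cA_X f_h(Y)}.
}

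The second step is to compute the generator difference pointwise. Both generators share the term $f(y\s{s,0})-f(y)$ in~\eqref{eq:gibbgen}, which carries no dependence on the underlying measure and therefore cancels; only the conditional-probability weights multiplying $\Delta_s f$ survive. Hence
\be{
\cA_Y f_h(y)-\cA_X f_h(y)=\frac{1}{N}\sum_{s\in[N]}\bclr{q_Y(y\s{s,1}|y)-q_X(y\s{s,1}|y)}\Delta_s f_h(y).
}
Substituting this into the previous display, using linearity to move the expectation inside the finite sum, and applying the triangle inequality together with $\babs{q_Y-q_X}=\babs{q_X-q_Y}$ gives exactly the claimed bound.

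The computations are entirely routine on the finite state space $\{0,1\}^N$, so there is no genuine analytic obstacle; the one point that deserves care is the stationarity identity $\IE \cA_Y f_h(Y)=0$. On a finite state space this is immediate: $\law(Y)$ is stationary (in fact reversible) for its Glauber dynamics, and $f_h$ is bounded, so the balance equation $\sum_y \IP(Y=y)\,\cA_Y f_h(y)=0$ holds with no convergence issues. Note also that irreducibility is invoked only for $\law(X)$, where it is needed so that $f_h$ is well-defined via Lemma~\ref{lem:genapp}; no hypothesis on the dynamics of $\law(Y)$ is required beyond reversibility.
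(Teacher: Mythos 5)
Your proposal is correct and follows essentially the same route as the paper's proof: both compare generators via $\IE h(X)-\IE h(Y)=\IE\cA_X f_h(Y)-\IE\cA_Y f_h(Y)$, exploit stationarity so that $\IE\cA_Y f_h(Y)=0$, observe that the measure-independent terms of the two generators cancel leaving only the $\bclr{q_X-q_Y}\Delta_s f_h$ terms, and finish with the triangle inequality. Your additional remarks (finiteness of the state space, irreducibility needed only for $\law(X)$) are accurate elaborations of points the paper leaves implicit.
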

\begin{proof}
From Lemma~\ref{lem:genapp} and the fact that $\IE \cA_Y f(Y)=0$,
we have 
\ba{
\babs{\IE h(X)-\IE h(Y)}&=\babs{\IE \cA_X f_h(Y)} \\
	&=\babs{\IE \cA_X f_h(Y)-\IE \cA_Y f_h(Y)}\\
	&= \frac{1}{N}\bbbabs{\sum_{s\in[N]}\IE \bbcls{\bclr{q_X(Y\s{s,1}|Y)- q_Y(Y\s{s,1}|Y)}\Delta_{s} f_h(Y)}} \\
	&\leq \frac{1}{N}\sum_{s\in[N]}\IE \bbcls{\babs{q_X(Y\s{s,1}|Y)- q_Y(Y\s{s,1}|Y)}\babs{\Delta_{s} f_h(Y)}}. \qedhere
}
\end{proof}

The next lemma bounds $\abs{\Delta_{s} f_h(Y)}$.
\begin{lemma}\label{lem:delf}
Let $X,Y\in\{0,1\}^N$ be random vectors, $h:\{0,1\}^N\to\IR$, $f_h$ be as in Lemma~\ref{lem:genapp}.
For each $s\in[N]$, $x\in\{0,1\}^N$, and $m\geq0$, let~$(U\t{x,s}(m), V\t{x,s}(m))$ be any coupling such that
${\mathcal{L}} (U\t{x,s}(m)) = {\mathcal{L}}(X(m) | X(0)=x\s{s,1})$ and ${\mathcal{L}} (V\t{x,s}(m)) =  {\mathcal{L}}(X(m) | X(0)=x\s{s,0})$.
Then
\bes{
\abs{\Delta_s f_h(x)} &\leq 
\mathop{\sum_{r\in[N]}}_{m\geq0} \norm{\Delta_r h} \IP\bclr{U\t{x,s}_r(m)\not=V\t{x,s}_r(m)}.
}
\end{lemma}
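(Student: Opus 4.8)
The plan is to start from the definition of $f_h$ in Lemma~\ref{lem:genapp} and compute $\Delta_s f_h(x)$ directly. Since the additive constant $\IE h(X)$ enters both $f_h(x\s{s,1})$ and $f_h(x\s{s,0})$ identically, it cancels in the difference, so I would write
\be{
\Delta_s f_h(x) = -\int_0^\infty \bclr{\IE[h(X(t))\mid X(0)=x\s{s,1}] - \IE[h(X(t))\mid X(0)=x\s{s,0}]}\,dt.
}
The first key step is to pass from the continuous-time semigroup appearing here to the discrete-time jump chain $X(m)$. Writing $K$ for the one-step transition kernel of the discrete Glauber dynamics from the introduction, a short computation shows the generator in~\eqref{eq:gibbgen} is exactly $\cA = K - I$ (the total jump rate out of each state is $1$), so the semigroup admits the Poisson representation $P_t = e^{t(K-I)} = e^{-t}\sum_{m\geq 0}\frac{t^m}{m!}K^m$. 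Substituting this, and recognising $(K^m h)(x\s{s,1}) = \IE h(U\t{x,s}(m))$ and $(K^m h)(x\s{s,0}) = \IE h(V\t{x,s}(m))$ through the coupling, gives
\be{
\Delta_s f_h(x) = -\int_0^\infty e^{-t}\sum_{m\geq 0}\frac{t^m}{m!}\,\IE\bcls{h(U\t{x,s}(m)) - h(V\t{x,s}(m))}\,dt.
}

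Next I would take absolute values, move them inside, and apply Tonelli (all terms are then nonnegative) to interchange the integral with the sum over $m$; the time integral evaluates cleanly, $\int_0^\infty e^{-t}\frac{t^m}{m!}\,dt = 1$ for every $m$, so
\be{
\babs{\Delta_s f_h(x)} \leq \sum_{m\geq 0}\IE\babs{h(U\t{x,s}(m)) - h(V\t{x,s}(m))}.
}
The final ingredient is an elementary path bound: if $u,v\in\{0,1\}^N$ differ on a set of coordinates, then changing them one coordinate at a time and telescoping gives $\babs{h(u)-h(v)} \leq \sum_{r\in[N]}\norm{\Delta_r h}\,\mathbbm{1}(u_r\neq v_r)$, since flipping a single coordinate $r$ alters $h$ by at most $\norm{\Delta_r h}$ regardless of the other coordinates. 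Applying this with $u = U\t{x,s}(m)$ and $v = V\t{x,s}(m)$, taking expectations, and swapping the order of summation over $m$ and $r$ yields the claimed bound.

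I expect the only genuinely delicate point to be the continuous-to-discrete conversion, namely justifying the Poisson representation $P_t = e^{-t}\sum_{m\geq 0}\frac{t^m}{m!}K^m$ together with the interchange of the infinite sum and the integral; this is precisely where irreducibility is used, since Lemma~\ref{lem:genapp} then guarantees $f_h$ is finite and the manipulations are legitimate. The remaining steps---cancellation of the constant, the coupling substitution, and the telescoping Lipschitz estimate---are routine. Note finally that the right-hand side may be infinite when the coupling never merges, in which case the inequality still holds trivially.
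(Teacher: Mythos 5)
Your proof is correct and follows essentially the same route as the paper's: both arguments reduce $\abs{\Delta_s f_h(x)}$ to the intermediate bound $\sum_{m\geq 0}\IE\abs{h(U^{[x,s]}(m))-h(V^{[x,s]}(m))}$ and then telescope coordinate-by-coordinate using $\norm{\Delta_r h}$. The only difference is how the time integral is converted to a sum over the discrete skeleton: the paper splits the integral over the jump times $T_m$ (independent of the skeleton, with mean-one holding times), whereas you use the equivalent uniformization identity $P_t=e^{-t}\sum_{m\geq 0}\tfrac{t^m}{m!}K^m$ together with $\int_0^\infty e^{-t}\tfrac{t^m}{m!}\,dt=1$ --- two expressions of the same fact.
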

\begin{proof}
 Let $0=T_0<T_1<T_2<\cdots$ be the jump times of the continuous time Glauber dynamics Markov chain.
Since the jump times are independent of the discrete skeleton, and using Lemma~\ref{lem:genapp}, we have for $x\in\{0,1\}^N$,
\ba{
\abs{\Delta_s f_h(x)}&\leq \int_{0}^\infty \bbabs{\IE \cls{h(X(t))|X(0)=x\s{s,1}}-\IE \cls{h(X(t))|X(0)=x\s{s,0}}} dt \\
		&\leq \sum_{m=0}^\infty \IE \bbbabs{\int_{T_m}^{T_{m+1}}  \bcls{h(U\t{x,s}(m))-h(V\t{x,s}(m))}dt}\\
		&=\sum_{m=0}^\infty \IE  \babs{h(U\t{x,s}(m))-h(V\t{x,s}(m))} \\
		&\leq \mathop{\sum_{r\in[N]}}_{m\geq0}\norm{\Delta_r h} \, \IE  \babs{U\t{x,s}_r(m)-V\t{x,s}_r(m)}\\
	&= \mathop{\sum_{r\in[N]}}_{m\geq0}\norm{\Delta_r h} \, \IP\bclr{U\t{x,s}_r(m)\not=V\t{x,s}_r(m)}. \qedhere
}
\end{proof}


{The approach for the proof of Theorem~\ref{thm:key} is closely related
to that of \cite[Proof of Theorem~4.3]{Chatterjee2005a}.}
\begin{proof}[Proof of Theorem~\ref{thm:key}]
Set 
\be{
B:=\left(1-\frac{1}{N}\right) I +  \frac{R}{N},
}
where $I$ denotes the identity matrix. {We follow the path coupling paradigm of \cite{Bubley1997}. A}
direct consequence of \cite[Formula~(4)]{Dyer2009}--note that their $R$ is the transpose of ours--
implies {that} 
there is a coupling of the Glauber dynamics of~$\law(X)$ such that
\be{
\IP\bclr{U\t{x,s}_r(m)\not=V\t{x,s}_r(m)}\leq \bclr{B^m}_{rs}.
}
Therefore by Lemma~\ref{lem:delf}, 
\bes{
\abs{\Delta_s f_h(x)} &\leq 
\mathop{\sum_{r\in[N]}}_{m\geq0} \norm{\Delta_r h} \bclr{B^m}_{rs}.
}
Applying this in Lemma~\ref{lem:genapp1}, and then using H\"older's inequality and sub-multiplicativity of matrix norms, we find
\bes{
\babs{\IE h(X)-\IE h(Y)}&\leq  N^{-1}  \mathop{\sum_{r,s\in[N]}}_{m\geq0} \IE \left[c_r (B^m)_{rs} v_s(Y)\right] \\
		&\leq N^{-1}\sum_{m\geq0} \norm{c}_q \, \IE \norm{B^m v(Y)}_p\\
		&\leq N^{-1} \norm{c}_q \, \IE \norm{v(Y)}_p \sum_{m\geq0}   \norm{B}^m_p. 
}
The result now follows after noting
\be{
\norm{B}_p=\bbbnorm{\bbclr{1-\frac{1}{N}} I +  \frac{1}{N}R\,}_p
{\leq \frac{N-1}{N} + \frac{1}{N} \norm{R}_p} \leq 1-\frac{\eps}{N}. \qedhere
}
\end{proof}

Specialising the proof of Theorem~\ref{thm:key} to the case where $p=1$,~$Y$ is a 
vector of independent Bernoulli variables, and writing~$X$ in Gibbs measure form, yields Theorem~\ref{thm:key1}, as we now show.
\begin{proof}[Proof of Theorem~\ref{thm:key1}]
Since
\bes{
 \IE \, \dham(\tilde U\t{x,s}, \tilde V\t{x,s})\leq (1-\rho),
}
path coupling implies that for each $s\in[N]$, $x\in\{0,1\}^N$, and $m\geq0$, 
there is a coupling~$(U\t{x,s}(m), V\t{x,s}(m))$  such that
${\mathcal{L}} (U\t{x,s}(m)) = {\mathcal{L}}(X(m) | X(0)=x\s{s,1})$, and ${\mathcal{L}} (V\t{x,s}(m)) =  {\mathcal{L}}(X(m) | X(0)=x\s{s,0})$,
and
\be{
\IE \, \dham\bclr{U\t{x,s}(m), V\t{x,s}(m)}\leq (1-\rho)^m.
}
Thus, from Lemma~\ref{lem:delf},
\bes{
\abs{\Delta_s f_h(x)} &\leq 
\mathop{\sum_{r\in[N]}}_{m\geq0} \norm{\Delta_r h} \IP\bclr{U\t{x,s}_r(m)\not=V\t{x,s}_r(m)} \\
	&\leq \norm{\Delta h}\sum_{m\geq 0} \IE \, \dham\bclr{U\t{x,s}(m), V\t{x,s}(m)}\\
	&= {\norm{\Delta h}} \rho^{-1}.
}
Now, Lemma~\ref{lem:genapp1} implies
\bes{
\babs{\IE h(X)-\IE h(Y)} &\leq  \frac{\norm{\Delta h}}{N\rho}\IE \norm{v(Y)}_1,
}
where we use the notation from Theorem~\ref{thm:key}.
To bound 
$\IE \norm{v(Y)}_1$, first we write the Glauber dynamics probabilities for $\law(X)$ as follows.
\begin{eqnarray*} 
q_X\bclr{x\s{s,1}|x}&=& \IP\bclr{X_s=1|(X_u)_{u\not=s}=(x_u)_{u\not=s}}\\
&=&\frac{ \IP\bclr{X_s=1;  (X_u)_{u\not=s}=(x_u)_{u\not=s}} }{\IP\bclr{X_s=1;  (X_u)_{u\not=s}=(x_u)_{u\not=s}} + \IP\bclr{X_s=0;  (X_u)_{u\not=s}=(x_u)_{u\not=s}}} \\
&=& \frac{ \exp\{ L(x^{(s,1)} ) \} }{ \exp \{ L(x^{(s,1)}) \}  +  \exp\{  L(x^{(s,0)}) \} }, \\
\end{eqnarray*}
which simplifies to 
\be{
q_X(x\s{s,1}|x)=\frac{e^{\Delta_s L(x)}}{e^{\Delta_s L(x)}+1}
=\frac{1+\tanh\bclr{\frac{1}{2} \Delta_s L(x)}}{2}.
}
Now, it is easy to see that
\ben{\label{eq:tanlog}
p_s=\frac{1+\tanh\bclr{\frac{1}{2} \log\bclr{\frac{p_s}{1-p_s}}}}{2},
}
and $\tanh$ is $1$-Lipschitz, so, noting that $q_Y(x\s{s,1}|x)=p_s$,
\be{
\bbabs{q_X(Y\s{s,1}|Y)-p_s}\leq \frac{1}{4}\bbbabs{\Delta_s L(Y)-\log\bbbclr{\frac{p_s}{1-p_s}}}.
}
A simple rearrangement using~\eq{eq:tanlog} shows that any solution~$p\in[0,1]^N$ to the system~\eq{eq:fp2} also satisfies
\be{
\log\bbbclr{\frac{p_s}{1-p_s}}=\IE \Delta_s L(Y),
}
and the result now easily follows.
\end{proof}

\section{Proof of ERGM results}\label{sec:ergm}

To prove Theorem~\ref{thm:smallbetas}, we use Theorem~\ref{thm:key}.
In the notation there, we want to bound the influence matrix. 
The ideas we use essentially come from \cite{Bhamidi2011},
where mixing times for the Glauber dynamics of ERGMs are derived. Their results are not strong enough to apply directly, but we are able to derive sharper, more quantitative versions.

For a graph $H$ with~$v_H$ vertices and~$e_H$ edges, define
\be{
r_H(x;ij)=\left(\frac{\Delta_{ij} t(H,x)}{2  e_H n(n-1)\cdots (n-v_H+3)}\right)^{1/(e_H-1)},
}
and $r_\ell(x;ij):=r_{H_\ell}(x;ij)$. If $Z$ is an \ER\ graph with parameter~$\astar$,
then for any small subgraph $H$ with at least two edges, with high probability, $r_H(Z;ij)=\astar+\lito(1)$. The basic idea of the proof is that if 
$r_H(x;ij)\approx \astar$, then the $\ergm(\beta)$ Glauber dynamics starting from~$x$ are similar to the \ER\ dynamics.  

The next lemma makes explicit some statements of  \cite[Proof of Lemma~18]{Bhamidi2011}. 
For a graph $H$ and $e\in E(H)$, define the graph $H\setminus e$ 
to be $H$ with the edge $e$ removed, but with all vertices retained, even if isolated.
{For a polynomial $f(x) = \sum_{i=1}^k a_k x^k$ we use the notation
$|f|(x) = \sum_{i=1}^k |a_k | x^k$.} 
\begin{lemma}\label{lem:gdsetbd}
For all $x\in\{0,1\}^{\binom{n}{2}}$, 
\ben{\label{eq:negsubctbd}
\sum_{st\not=ij} \bbabs{q\bclr{x\s{st,1}|x\s{ij,1}}-q\bclr{x\s{st,1}|x\s{ij,0}}}\leq \tsfrac{1}{2}\abs{\Phi}'(1).
}
Assuming now $\beta_\ell>0$ for $\ell=2,\ldots,k$, if $\astar\in[0,1]$ satisfies $\astar=\phi(\astar)$ and $x\in\{0,1\}^{\binom{n}{2}}$
is such that for all edges $st\not=ij$ and all graphs~$H=H_\ell\setminus e$ for some $\ell=2,\ldots,k$ and $e\in E(H_\ell)$, 
\ben{\label{eq:subctbd}
 \max_{st\not= ij} \bbclc{\babs{r_H(x\s{ij,1}; st)-\astar}\vee\babs{r_H(x\s{ij,0}; st)-\astar}}\leq \eps,
}
then
\ban{
\sum_{st\not=ij} &\babs{q\bclr{x\s{st,1}|x\s{ij,1}}-q\bclr{x\s{st,1}|x\s{ij,0}}} \notag \\
	&\leq \frac{1}{2}\left[\Phi'(\astar) + \eps \Phi''(1)\right]\bbclr{1 \wedge \left[\mathrm{sech}^2\bclr{\Phi(\astar)} +  C_2 \left(\eps +n^{-1}\right) \Phi'(1)\right]}.
	\label{eq:contractbd}
}
\end{lemma}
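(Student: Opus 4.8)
The plan is to handle both inequalities through one mechanism. Writing $g(u):=\tfrac12(1+\tanh u)$, the identity $q\bclr{x\s{st,1}\,|\,y}=\tfrac12\bclr{1+\tanh\bclr{\tfrac12\Delta_{st}L(y)}}$ from the proof of Theorem~\ref{thm:key1}, together with the definition of $r_\ell$, gives $q\bclr{x\s{st,1}\,|\,y}=g\bclr{\Psi(y;st)}$ where
\[
\Psi(y;st):=\tfrac12\Delta_{st}L(y)=\sum_{\ell=1}^k\beta_\ell e_\ell\, r_\ell(y;st)^{e_\ell-1}.
\]
Since $g'(u)=\tfrac12\mathrm{sech}^2(u)$ obeys $0\le g'\le\tfrac12$, the mean value theorem yields, for each $st\ne ij$, a point $\xi_{st}$ between $\Psi(x\s{ij,0};st)$ and $\Psi(x\s{ij,1};st)$ with
\[
q\bclr{x\s{st,1}|x\s{ij,1}}-q\bclr{x\s{st,1}|x\s{ij,0}}=g'(\xi_{st})\bclr{\Psi(x\s{ij,1};st)-\Psi(x\s{ij,0};st)}.
\]
The algebraic core is that, because $\Delta_{ij}$ and $\Delta_{st}$ commute for $ij\ne st$,
\[
\Psi(x\s{ij,1};st)-\Psi(x\s{ij,0};st)=\sum_{\ell=1}^k\frac{\beta_\ell\,\Delta_{ij}\Delta_{st}t(H_\ell,x)}{2\,n(n-1)\cdots(n-v_\ell+3)},
\]
where $\Delta_{ij}\Delta_{st}t(H_\ell,x)\ge0$ counts the injections of $V(H_\ell)$ sending one edge to $\{i,j\}$, a second to $\{s,t\}$, and the other $e_\ell-2$ edges into $E(x)$ (nonnegativity following as in Remark~\ref{rem:var}).

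For the uniform bound~\eqref{eq:negsubctbd} I would use only $g'\le\tfrac12$ and a crude count. Summing the displayed identity over $st\ne ij$,
\[
\sum_{st\ne ij}\babs{q\bclr{x\s{st,1}|x\s{ij,1}}-q\bclr{x\s{st,1}|x\s{ij,0}}}\le\frac12\sum_{\ell=1}^k\frac{\abs{\beta_\ell}}{2\,n(n-1)\cdots(n-v_\ell+3)}\sum_{st\ne ij}\Delta_{ij}\Delta_{st}t(H_\ell,x).
\]
There are at most $2e_\ell(n-2)(n-3)\cdots(n-v_\ell+1)$ injections pinning an edge to $\{i,j\}$, each offering at most $e_\ell-1$ choices for the second pinned edge, so $\sum_{st\ne ij}\Delta_{ij}\Delta_{st}t(H_\ell,x)\le 2e_\ell(e_\ell-1)(n-2)\cdots(n-v_\ell+1)$. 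As $\tfrac{(n-2)\cdots(n-v_\ell+1)}{n(n-1)\cdots(n-v_\ell+3)}=\tfrac{(n-v_\ell+2)(n-v_\ell+1)}{n(n-1)}\le1$, the right side is at most $\tfrac12\sum_\ell\abs{\beta_\ell}e_\ell(e_\ell-1)=\tfrac12\abs{\Phi}'(1)$.

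For the sharp bound~\eqref{eq:contractbd} (with $\beta_\ell>0$, $\ell\ge2$, so the combination above is a nonnegative sum) I would sharpen both factors of $\sum_{st}g'(\xi_{st})\babs{\Psi(x\s{ij,1};st)-\Psi(x\s{ij,0};st)}\le\bclr{\max_{st}g'(\xi_{st})}\sum_{st}\babs{\Psi(x\s{ij,1};st)-\Psi(x\s{ij,0};st)}$. For the sum, \eqref{eq:subctbd} forces the single-edge-removed densities $r_{H_\ell\setminus e}(x\s{ij,b};st)$ into $[\astar-\eps,\astar+\eps]$; rewriting $\sum_{st}\Delta_{ij}\Delta_{st}t(H_\ell,x)$ through these reduced counts and Taylor expanding $a\mapsto a^{e_\ell-2}$ about $\astar$ with the remainder evaluated at $a=1$ (and the falling-factorial ratio bounded by $1$) produces $\sum_{st}\babs{\Psi(x\s{ij,1};st)-\Psi(x\s{ij,0};st)}\le\Phi'(\astar)+\eps\Phi''(1)$. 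For the derivative factor, each $\xi_{st}$ lies within $\sum_\ell\beta_\ell e_\ell\babs{r_\ell^{e_\ell-1}-(\astar)^{e_\ell-1}}\le(\eps+n^{-1})\Phi'(1)$ of $\Phi(\astar)$, the $n^{-1}$ arising from the normalization discrepancy between $r_{H_\ell}$ and the counts controlled by \eqref{eq:subctbd}; since $C_2=\max_a\babs{\tfrac{d}{da}\mathrm{sech}^2 a}$, this gives
\[
\max_{st}g'(\xi_{st})\le\tfrac12\bbclr{1\wedge\bcls{\mathrm{sech}^2\bclr{\Phi(\astar)}+C_2(\eps+n^{-1})\Phi'(1)}}.
\]
Multiplying the two estimates yields~\eqref{eq:contractbd}.

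The main obstacle is the refined combinatorial step behind $\sum_{st}\Delta_{ij}\Delta_{st}t(H_\ell,x)$: one must express this pinned double-difference count in terms of the single-edge-removed densities $r_{H_\ell\setminus e}$ that \eqref{eq:subctbd} controls, match the several falling-factorial normalizations exactly (these are what generate the $O(n^{-1})$ corrections), and bound the Taylor remainders uniformly by evaluating the relevant derivatives at $a=1$, so that they collapse into the clean quantities $\Phi''(1)$ and $\Phi'(1)$. Once that accounting is in place, the remainder is the mean value theorem and the two elementary facts $g'\le\tfrac12$ and $C_2=\tfrac{4}{3\sqrt3}$.
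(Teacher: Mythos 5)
Your proposal is correct and follows essentially the same route as the paper's proof: the same $\tanh$-difference analysis with the Lipschitz constant $C_2$ of $\mathrm{sech}^2$, the same key identity expressing $\Psi(x^{(ij,1)};st)-\Psi(x^{(ij,0)};st)$ through the nonnegative double differences $\Delta_{ij}\Delta_{st}t_\ell(x)$, the same crude injection count (via pinning an edge to $\{i,j\}$) yielding $\tfrac12\abs{\Phi}'(1)$, and the same reduction of $\sum_{st\neq ij}\Delta_{ij}\Delta_{st}t_\ell(x)$ to the edge-deleted densities $r_{H_\ell\setminus e}$ plus a mean-value expansion of $a\mapsto a^{e_\ell-2}$, yielding $\Phi'(\astar)+\eps\,\Phi''(1)$. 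The only cosmetic difference is your use of the exact mean value form $g'(\xi_{st})$ where the paper Taylor-expands to second order about $\Psi(x^{(ij,0)};st)$; note that the paper's $n^{-1}$ term actually originates in that second-order remainder via $\Delta_{st}\Delta_{ij}t_\ell(x)\leq 4e_\ell(e_\ell-1)/n$, not in a ``normalization discrepancy,'' so in your version it is simply harmless slack, since the hypothesis (read, as the paper itself implicitly reads it, as controlling the full-graph densities $r_{H_\ell}$ at both $x^{(ij,0)}$ and $x^{(ij,1)}$) pins down $\xi_{st}$ to within $\eps\,\Phi'(1)$ of $\Phi(\astar)$ directly.
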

\begin{proof}
{For $st\not=ij$}, we bound
\ba{
&\bbabs{ q\bclr{x\s{st,1}|x\s{ij,1}}-q\bclr{x\s{st,1}|x\s{ij,0}}}\\
&=\frac{1}{2}\left|\tanh\bbbclr{\frac{1}{2}\sum_{\ell=1}^k \beta_\ell \bclr{\Delta_{st} t_\ell(x\s{ij,1})}}-\tanh\bbbclr{\frac{1}{2}\sum_{\ell=1}^k \beta_\ell \bclr{\Delta_{st} t_\ell(x\s{ij,0})}}\right| \\
&\leq \frac{1}{4}
\bbbclr{\sum_{\ell=1}^k\abs{\beta_\ell} \bclr{\Delta_{st} \Delta_{ij} t_\ell(x)}} \\
&\qquad\qquad\times
\bbbclr{1\wedge 
\bbbcls{\mathrm{sech}^2
\bbbclr{\frac{1}{2}\sum_{\ell=1}^k \beta_\ell \bclr{\Delta_{st} t_\ell(x\s{ij,0})}}+ \frac{C_{2}}{4}\bbbclr{\sum_{\ell=1}^k \abs{\beta_\ell} \Delta_{st} \Delta_{ij} t_\ell(x)}}},
}
where the inequality follows by using first and second order Taylor's theorem for $\tanh$.
For the first assertion,  {first} we use the~$1$ in the minimum and sum over edges $st\not=ij$. {Next}
 recall that   $\Delta_{st}\Delta_{ij}t(H_\ell,x)$
is the number of $(x\s{ij,1})\s{st,1}$ edge-preserving injections of~$V(H_\ell)$ into $V(x)$, which use \emph{both} edges~$ij$ and~$st$. Writing $t(H_\ell, x; e\mapsto st)$ for the number of these injections that map~$e$ to~$st$ in~$x$, we have
\be{
\Delta_{st} \Delta_{ij} t(H_\ell, x)=\sum_{e\in H_\ell}
{\frac{ \Delta_{ij} t(H_\ell, x; e\mapsto st)}{n(n-1)\cdots(n-v_\ell+3)}} 
}
and then 
\besn{\label{eq:4k4}
\sum_{st\not=ij} \bclr{\Delta_{st} \Delta_{ij} t_\ell(x)}
	&=\sum_{st\not=ij} \sum_{e\in E(H_\ell)} \frac{ \Delta_{ij} t(H_\ell, x; e\mapsto st)}{n(n-1)\cdots(n-v_\ell+3)}\\
	&=\sum_{e\in H_\ell} \frac{\Delta_{ij}t(H_\ell\setminus e, x)}{n(n-1)\cdots(n-v_\ell+3)}.
}
where the first equality is by swapping summations; c.f.~\cite[Lemma~10]{Bhamidi2011}.
Now, the first assertion follows since
\be{
\frac{\Delta_{ij}t(H_\ell\setminus e, x)}{n(n-1)\cdots(n-v_\ell+3)}\leq\frac{\Delta_{ij}t(H_\ell\setminus e, \bf{1})}{n(n-1)\cdots(n-v_\ell+3)}\leq 2(e_\ell-1),
}
where $\textbf{1}$ denotes the complete graph. {For the last inequality, note that $\Delta_{ij}t(H_\ell\setminus e, \bf{1})$ is the number of injections from $V(H_\ell \setminus e$) into $V(\bf{1})$ using edge $ij$. There are at most $ 2(e_\ell-1)$ of assigning vertices of $H_\ell \setminus e$ to $ij$, and there are at most $n(n-1)\cdots(n-v_\ell+3)$ ways to assign the remaining vertices. Hence the inequality follows.}
 
{For the second assertion,} assume that $\beta_\ell>0$ for $\ell=2,\ldots,k$. {For an alternative bound on $\sum_{\ell=1}^k {\beta_\ell} \bclr{\Delta_{st} \Delta_{ij} t_\ell(x)}$}, note that
\be{
\Delta_{st} t_\ell(x\s{ij,0})=2 e_\ell r_\ell(x\s{ij,0}; st)^{e_\ell-1}.
}
{Hence} by~\eq{eq:subctbd} and the fact that; using the positivity of $\beta_2,\ldots,\beta_k$; $0\leq \Phi'(a) \le \Phi'(1) = \sum_{\ell=2}^k \beta_\ell e_\ell(e_\ell-1)$
for $0<a<1$, we have
\ban{\nonumber
\bbbabs{\frac{1}{2}\sum_{\ell=1}^k \beta_\ell \bclr{\Delta_{st} t_\ell(x\s{ij,0})}-\Phi(\astar)}
&={\bbbabs{\sum_{\ell=1}^k \beta_\ell e_\ell \bbclr{r_\ell(x\s{ij,0}; st)^{e_\ell-1} -(\astar)^{e_\ell-1}}}} \\ \label{firstineq}
&\leq \eps \sum_{\ell=2}^k \beta_\ell e_\ell(e_\ell-1);
}
we {shall derive \eqref{firstineq}, and use it} a few more times, below.
Then we can write
\be{
\mathrm{sech}^2
\bbbclr{\frac{1}{2}\sum_{\ell=1}^k \beta_\ell \bclr{\Delta_{st} t_\ell(x\s{ij,0})}}
\leq \mathrm{sech}^2\bclr{\Phi(\astar)} +  \eps  C_2 
\sum_{\ell=2}^k \beta_\ell e_\ell(e_\ell-1).
}
We also bound
\ben{\label{secondineq}
\Delta_{st} \Delta_{ij} t_\ell(x)\leq \Delta_{st} \Delta_{ij} t_\ell(\textbf{1})
\leq \frac{4 e_\ell (e_\ell-1)}{n}.
}
 To see {\eqref{firstineq} and \eqref{secondineq}}, first note that for $\ell=2,\ldots,k$, following the same argument as for Remark \ref{rem:var}, 
 $\Delta_{st}\Delta_{ij}t(H_\ell,x)$
is the number of $(x\s{ij,1})\s{st,1}$ edge-preserving injections of~$V(H_\ell)$ into $V(x)$, which use \emph{both} edges~$ij$ and~$st$. So Inequality~\eqref{firstineq}follows from the fact that such subgraph injections 
increase with the number of  edges, maximized for the complete graph $\textbf{1}$.
For Inequality~\eqref{secondineq}, $\Delta_{st}\Delta_{ij}t(H_\ell,\textbf{1})$ is simply the number of
injections from $V(H_\ell)$ into $V(\textbf{1})$ using both edges $st$ and $ij$.  
To count the number of injections, there are at most $4e_\ell (e_\ell-1)$ ways to assign vertices of $H_\ell$ to $st$ and $ij$ (and maybe fewer depending on the topology of $H_\ell$ and whether $st$ and $ij$ share a vertex), and then there are at most $(n-3)\cdots(n-v_\ell+1)$ ways to assign the remaining vertices. The remaining vertices can be counted: $v_\ell-3$ vertices of $H_\ell$ to $(n-3)$ remaining vertices of the big graph, when $st$ and $ij$ share a vertex, or $v_\ell-4$ vertices of $H_\ell$ to $(n-4)$ remaining vertices otherwise. Thus we find
\be{
\Delta_{st} \Delta_{ij} t_\ell(\textbf{1})\leq 4e_\ell(e_\ell-1) \frac{(n-3)\cdots(n-v_\ell+1)}{n\cdots (n-v_\ell+3)}\leq \frac{4e_\ell(e_\ell-1)}{n},
}
as desired.

Combining the bounds of the previous displays, we have
\ba{
\bbabs{q&\bclr{x\s{st,1}|x\s{ij,1}}-q\bclr{x\s{st,1}|x\s{ij,0}}}\\
&\leq
\bbbclr{\sum_{\ell=1}^k \beta_\ell \bclr{\Delta_{st} \Delta_{ij} t_\ell(x)}}
\bbbclr{1\wedge 
\bbbcls{\frac{\mathrm{sech}^2\bclr{\Phi(\astar)}}{4} +  \bbbclr{\eps +\frac{1}{n}}\frac{ C_2}{4} 
\sum_{\ell=2}^k \beta_\ell e_\ell(e_\ell-1)
}
}.
}
Now summing over edges $st\not=ij$, using~\eq{eq:4k4},~\eq{eq:subctbd}, and the mean value theorem, we find
\be{
\sum_{st\not=ij} \bclr{\Delta_{st} \Delta_{ij} t_\ell(x)}\leq \sum_{e\in H_\ell} \frac{\Delta_{ij}t(H_\ell\setminus e, x)}{n(n-1)\cdots(n-v_\ell+3)}\leq 2e_\ell(e_\ell-1)\left((\astar)^{e_\ell-2} + \eps(e_\ell-2)\right).
}
We now have
\be{
\sum_{\ell=1}^k \beta_\ell\sum_{st\not=ij} \bclr{\Delta_{st} \Delta_{ij} t_\ell(x)}
	\leq 2 \Phi'(\astar) +2 \eps \Phi''(1),
}
and combining the bounds above yields the lemma.
\end{proof}

We prove a quantitative version of \cite[Lemma~18]{Bhamidi2011}.
\begin{lemma}\label{lem:hambd1}
If $x\in\{0,1\}^{\binom{n}{2}}$ satisfies the hypotheses of Lemma~\ref{lem:gdsetbd}, then there is a coupling of two realisations  of one step of the embedded discrete time Markov chain for the Glauber dynamics for $\ergm(\beta)$, denoted
$\bclr{U\t{x,ij}(m),V\t{x,ij}(m)}_{m=0,1}$, such that $U\t{x,ij}(0)=x\s{ij,1}$, $V\t{x,ij}(0)=x\s{ij,0}$, and
\ba{
\IE \dham\bclr{U\t{x,ij}(1),V\t{x,ij}(1)}\leq (1-\rho),
}
where
\be{
\rho=\frac{1-\tsfrac{1}{2}\abs{\Phi}'(1)}{\binom{n}{2}}.
}
If $\beta_\ell>0$ for $\ell=2,\ldots,k$, then the same inequality holds with 
\be{
\rho=\frac{1-\frac{1}{2}\left[\Phi'(\astar) + \eps \Phi''(1)\right]\bclr{1 \wedge \left[\mathrm{sech}^2\bclr{\Phi(\astar)} +  C_2 \left(\eps +n^{-1}\right) \Phi'(1)\right]}}{\binom{n}{2}}.
}
\end{lemma}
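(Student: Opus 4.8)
The plan is to run the path-coupling argument of \cite{Bubley1997} for a single step of the embedded discrete-time chain, exploiting that the two starting configurations $U\t{x,ij}(0)=x\s{ij,1}$ and $V\t{x,ij}(0)=x\s{ij,0}$ differ in exactly the one coordinate $ij$, so that $\dham(U\t{x,ij}(0),V\t{x,ij}(0))=1$. To take one step I would first draw a single coordinate $st$ uniformly from the $\binom{n}{2}$ coordinates and use the \emph{same} choice in both chains, and then resample that coordinate in each chain according to a maximal coupling of the two single-site conditional laws. Each such law is Bernoulli, so the maximal coupling makes the two resampled values disagree with probability exactly the total variation distance between them, which for $st\not=ij$ equals the summand $d_{st}:=\babs{q\bclr{x\s{st,1}|x\s{ij,1}}-q\bclr{x\s{st,1}|x\s{ij,0}}}$ already appearing in Lemma~\ref{lem:gdsetbd}.

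First I would split on the chosen coordinate. If $st=ij$ (probability $1/\binom{n}{2}$), then since $U\t{x,ij}(0)$ and $V\t{x,ij}(0)$ agree on every coordinate other than $ij$, the conditioning sets for resampling $ij$ coincide in the two chains; hence the two conditional Bernoulli laws are identical and the maximal coupling sets the resampled values equal. As no other coordinate is touched, the Hamming distance drops to $0$. If $st\not=ij$ (each with probability $1/\binom{n}{2}$), the $ij$ coordinate is left unchanged and still contributes $1$, every coordinate other than $st$ and $ij$ continues to agree, and the $st$ coordinate disagrees after resampling with probability $d_{st}$, contributing $d_{st}$ in expectation. Collecting the cases gives the exact identity
\be{
\IE \dham\bclr{U\t{x,ij}(1),V\t{x,ij}(1)}
 = 1 - \frac{1}{\binom{n}{2}}\bbbclr{1-\sum_{st\not=ij} d_{st}}.
}

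It then remains only to insert the two bounds on $\sum_{st\not=ij} d_{st}$ supplied by Lemma~\ref{lem:gdsetbd}. The unconditional estimate~\eq{eq:negsubctbd} gives $\sum_{st\not=ij} d_{st}\leq \tsfrac{1}{2}\abs{\Phi}'(1)$, producing $\rho=\bclr{1-\tsfrac{1}{2}\abs{\Phi}'(1)}/\binom{n}{2}$; and, under the positivity of $\beta_2,\ldots,\beta_k$ together with hypothesis~\eq{eq:subctbd}, the sharper estimate~\eq{eq:contractbd} produces the second value of $\rho$. In both cases the identity above yields $\IE \dham(U\t{x,ij}(1),V\t{x,ij}(1))\leq 1-\rho$, as claimed.

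The bulk of the work has in fact already been discharged in Lemma~\ref{lem:gdsetbd}, so the only genuinely delicate points here are the single-step bookkeeping and the coupling set-up: verifying that choosing $st=ij$ forces exact agreement (this is precisely where the single-coordinate difference of the two initial configurations is used), that no coordinate other than the resampled one can change, and that the per-coordinate disagreement probability under the maximal coupling is \emph{exactly} the summand $d_{st}$ of Lemma~\ref{lem:gdsetbd}, so that its two bounds transfer verbatim. I expect the main obstacle to be stating the maximal-coupling step cleanly enough that this last identification is transparent; everything else is a short expectation computation.
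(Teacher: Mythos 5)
Your proposal is correct and takes essentially the same route as the paper's own proof: the paper likewise picks a common edge uniformly, updates it in both chains via the optimal (maximal) coupling of the two Bernoulli conditional laws so that the disagreement probability for $st\not=ij$ is exactly $\babs{q\bclr{x\s{st,1}|x\s{ij,1}}-q\bclr{x\s{st,1}|x\s{ij,0}}}$, notes the Hamming distance changes by $-1$, $+1$, or $0$ according to whether $ij$ is chosen, another edge is chosen and the coupling fails, or otherwise, and then inserts the two bounds of Lemma~\ref{lem:gdsetbd}. Your one-step identity coincides with the paper's, so there is no gap.
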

\begin{proof}
At the first step, we choose a common edge at random and update with the optimal coupling of Bernoulli variables, which is unsuccessful for edge~$st$ with probability
$\babs{q\bclr{x\s{st,1}|x\s{ij,1}}-q\bclr{x\s{st,1}|x\s{ij,0}}}$.
Now note that
\be{
\dham\bclr{U\t{x,ij}(1),V\t{x,ij}(1)}-\dham\bclr{U\t{x,ij}(0),V\t{x,ij}(0)}
} is~$-1$ if edge~$ij$ is chosen, is $1$ if another edge is chosen and the coupling fails, and is otherwise~$0$.
Thus
\be{
\IE \dham\bclr{U\t{x,ij}(1),V\t{x,ij}(1)}=1-\binom{n}{2}^{-1} + \binom{n}{2}^{-1}\sum_{st\not=ij} \bbabs{q\bclr{x\s{st,1}|x\s{ij,1}}-q\bclr{x\s{st,1}|x\s{ij,0}}},
}
and the result now follows immediately from Lemma~\ref{lem:gdsetbd}.
\end{proof}

With these lemmas, we can prove Theorem~\ref{thm:smallbetas}.
\begin{proof}[Proof of Theorem~\ref{thm:smallbetas}]
Note that $0\leq r_H(y;st)\leq 1$ for any graph $H$ and $y\in\{0,1\}^{\binom{n}{2}}$, and so setting $\eps=A^*$, we have that~\eq{eq:subctbd} is automatically satisfied
and Lemma~\ref{lem:hambd1} applies with~$\rho=\gamma/\binom{n}{2}>0$; the positivity by the assumption of the theorem. Applying Theorem~\ref{thm:key1} and using symmetry of the edges proves the result.
\end{proof}

\begin{proof}[Proof of Theorem~\ref{thm:negbetas}]
Lemma~\ref{lem:hambd1} applies with $\rho=\bclr{1-\tsfrac{1}{2}\abs{\Phi}'(1)}/\binom{n}{2}>0$, so Theorem~\ref{thm:key1} and edge symmetry yields the result.
\end{proof}

\begin{proof}[Proof of Proposition~\ref{prop:2star}]
Let $k=2$ and $H_2$ be a two-star. Applying Theorem~\ref{thm:negbetas}, we compute
\be{
\abs{\Phi}(a)= \abs{\beta_1} +2 \abs{ \beta_2 } a, \hspace{5mm}
\abs{\Phi}'(a)=2\abs{ \beta_2}.
} 
The condition~\eq{eq:simpcond} is the same as $\abs{\beta_2}<1$,
%
and the result now follows after noting 
$\frac{1}{2}\Delta_{ij} t(H_2, Z)\eqlaw B_1+B_2$, where $B_1,B_2\sim\Bi(n-2,\astar)$ and are independent, and so
$\Var(\Delta_{ij} t(H_2, Z))=8 (n-2)\astar (1-\astar)$. 
\end{proof}

\begin{proof}[Proof of Proposition~\ref{prop:tri}]
 Let $k=2$ and $H_2$ be a triangle. Applying Theorem~\ref{thm:negbetas}, we compute
\be{
\abs{\Phi}(a)= \abs{\beta_1} +3 \abs{\beta_2} a^2, \hspace{5mm} \abs{\Phi}'(a)=6 \abs{\beta_2} a,
}
and $\abs{\Phi}'(1)=
6 \abs{\beta_2}$. Then we can apply Theorem~\ref{thm:negbetas} 
assuming~\eq{eq:simpcond}, which is the same as $\abs{\beta_2}<1/3$.
The result now follows after noting that  $\frac{1}{6}\Delta_{ij} t(H_2, Z)\sim\Bi(n-2,(\astar)^2)$ and so 
$\Var(\Delta_{ij} t(H_2, Z))=36 (n-2)(\astar)^2 (1-(\astar)^2)$. 
\end{proof}

\begin{proof}[Proof of Theorem~\ref{thm:hightemp}]
As \cite{Bhamidi2011} showed, the $\ergm$ chain may not be contracting over the entire state space in the high temperature regime. Thus we will not use Theorem~\ref{thm:key}, but instead combine and apply Lemmas~\ref{lem:genapp1} and~\ref{lem:delf} directly.

For each $m\geq0$, let $\bclr{U\t{x,ij}(m),V\t{x,ij}(m)}$ be a coupling of the Glauber dynamics of the~$\ergm(\beta)$, with initial states $U\t{x,ij}(0)=x\s{ij,1}$, $V\t{x,ij}(0)=x\s{ij,0}$. Lemma~\ref{lem:delf} implies that
\ban{
\abs{\Delta_{ij} f_h(x)} &\leq 
\norm{\Delta h} \mathop{\sum_{st\in\gset{n}}}_{m\geq0} \IP\bclr{U\t{x,ij}_{st}(m)\not=V\t{x,ij}_{st}(m)} \notag   \\
	&\leq \norm{\Delta h} 
		\bbcls{\sum_{m<t_n} \IE \dham\bclr{U\t{x,ij}(m), V\t{x,ij}(m)}  \label{eq:t1}\\	
		& \qquad\qquad +\binom{n}{2}\sum_{m\geq t_n}^{\infty}\IP\bclr{U\t{x,ij}(m)\not=V\t{x,ij}(m)}}, \label{eq:t2}
}
where we set $t_n:=c n^2 \log(n)^2$, with $c$ to be chosen large enough (for~\eq{eq:t2tail} below). The main idea of the proof is to bound~\eq{eq:t2} by using that  the chain mixes in $n^2 \log(n)$ steps, and to bound~\eq{eq:t1} separately for the two cases where the $x$ is such that the chain stays in a contracting region for $t_n$ steps with high probability, and where $x$ is not. Then we show the former case occurs with high probability under the \ER\ distribution.

To bound~\eq{eq:t2}, we use \cite[Theorem~5]{Bhamidi2011}, which states that the $1/4$-mixing time $t_{\rm mix}$ of these Glauber dynamics is of order $t_{\rm mix}=\bigo(n^2 \log(n))$ and thus, using \cite[Exercise~4.3 and Formula~(4.33)]{Levin2009},   there are couplings for $m\geq t_n$,  such that for some $d>0$, 
\ba{
\IP\bclr{U\t{x,ij}(m)\not=V\t{x,ij}(m)}&\leq \IP\bbclr{U\t{x,ij}\bclr{\bfloor{\tsfrac{m}{t_{\rm mix}}} t_{\rm mix}}\not=V\t{x,ij}\bclr{\bfloor{\tsfrac{m}{t_{\rm mix}}} t_{\rm mix}}} \\
				&\leq 2^{-\bfloor{\tsfrac{m}{t_{\rm mix}}}} \\
				&\leq \bbclr{ e^{-\frac{d}{n^2\log(n)}}}^{m}.
}
We now easily find that for a constant $d'$ not depending on~$c$,
\ben{\label{eq:t2tail}
\binom{n}{2}\sum_{m\geq t_n}^{\infty}\IP\bclr{U\t{x,ij}(m)\not=V\t{x,ij}(m)}\leq d' n^4 \log(n) n^{-dc}.
}

Now, assume $n$ is large enough so that there is an $\eps>0$ such that~\eq{eq:contractbd} is less than one; 
call the right-hand side of \eq{eq:contractbd}~$\alpha<1$. This is always possible since~\eq{eq:contractbd} decreases as both $\eps\to0$ and $n\to\infty$, and has limit $\phi'(\astar)$, which is strictly less than one by assumption. For$\nu>0$, let $\cE_{\nu}$ be the set of $x\in\{0,1\}^{\binom{n}{2}}$ such that for all $ij\in\gset{n}$ and $H$ as in Lemma~\ref{lem:gdsetbd},~\eq{eq:subctbd} with $\eps$ replaced by $\nu$ is satisfied.
We only consider $\nu\leq \eps$, so that the chain is contracting while in $\cE_\nu$.

Let $0\leq m < t_n$, and $\bclr{U\t{x,ij}(m),V\t{x,ij}(m)}$ be the coupling from \cite{Bhamidi2011}. Denote the event that the coupling (and hence the path coupling by monotonicity) stays in the region $\cE_\nu$ up to time $t_n$ by
\be{
B_\nu:=\bigcap_{m<t_n} \bbclc{U\t{x,ij}(m)\in \cE_{\nu}, V\t{x,ij}(m)\in \cE_{\nu}}.
}
Combining~(18) of \cite{Bhamidi2011} and the comment before Claim~15 of \cite{Bhamidi2011}, as well as the comment before Lemma~16 of \cite{Bhamidi2011}, we find that for any $\mu>0, \delta>0$ small enough, if $x\in\cE_{2\mu}\setminus \cE_{\mu}$, then  there is a $d''>0$ such that
\ben{\label{eq:cgs}
\IP(B_{2\mu+\delta}^c ) \leq  t_n^4 e^{-d'' n}.
}
If $x\in \cE_{\mu}$, then in order for either chain to leave $\cE_{2\mu+\delta}$ it must first enter $\cE_{2\mu}\setminus \cE_{\mu}$, and so the Markov property and the argument above implies that~\eq{eq:cgs} holds also for $x\in \cE_{\mu}$. 
Now, choosing $\mu,\delta$ so that $2\mu+\delta<\eps$ and, in particular,
$B_\eps^c\subseteq B_{2\mu+\delta}^c$, we find that for $m<t_n$, 
\bes{
\IE \dham\bclr{U\t{x,ij}(m), V\t{x,ij}(m)}&\leq \II[x\in\cE_{\mu}]\IP[B_\eps]\left(1-{(1-\alpha)} \binom{n}{2}^{-1}\right)^m \\
&\qquad\quad+\bbclr{\II[x\not\in\cE_{\mu}] + \IP[B_\eps^c]}\binom{n}{2}. 
}
Combining these last two displays with~\eq{eq:cgs},~\eq{eq:t2tail},~\eq{eq:t2},~\eq{eq:t1},  noting the chain is contracting under $B_{\eps}$, and choosing~$c$ large enough yields
\be{
\abs{\Delta_{ij} f_h(x)}
	\leq \norm{\Delta h}\binom{n}{2}\left( {\{ (1- \alpha)}^{-1} 
		+\lito(1) \} \II[x\in\cE_{\mu}] +t_n \II[x\not\in\cE_{\mu}]\right).
}
Now applying Lemma~\ref{lem:genapp1} and noting that $\babs{q_X(\cdot|y)- q_Y(\cdot|y)}\leq 1$, we have
\bes{
\babs{\IE h(X)&-\IE h(Z)} \\
	 &\leq  \norm{\Delta h}\bbbclc{\left({(1- \alpha)}^{-1}+\lito(1)\right)\sum_{ij\in\gset{n}}\IE \bbcls{{\II[Z\in\cE_{\mu}] }\babs{q_X(Z\s{ij,1}|Z)- q_Z(Z\s{ij,1}|Z)}} \\
	&\hspace{3cm}+ t_n \sum_{ij\in\gset{n}} \IP\bclr{Z\not\in\cE_{\mu}}}.
}
The first term (after dropping the indicator) is bounded the same as in the proof of Theorem~\ref{thm:smallbetas} and Corollary~\ref{cor:vhightemp}. For the second term, we show that for all $H$ under consideration and $st\not=ij$, both $r_H(Z\s{ij,1},st)$ 
and $r_H(Z\s{ij,0},st)$ are highly concentrated, and then the result follows from a union bound.

From the definition of $r_H$ and elementary considerations of the function of the function $[0,1]\ni w\mapsto w^{1/(e_H-1)}$, we have
\be{
\IP\bbclr{\babs{r_H(Z\s{ij,\cdot},st)-\astar}>\mu}\leq\IP\bbbclr{\bbabs{\frac{\Delta_{st} t(H,Z\s{ij,\cdot})}{2  e_H n(n-1)\cdots (n-v_H+3)}-(\astar)^{e_H-1}}>\mu^{e_H-1}}.
}
Since
\ba{
\frac{\Delta_{st}t(H,Z\s{ij,\cdot})}{2  e_H n(n-1)\cdots (n-v_H+3)}
	&=\frac{\Delta_{st}t(H,Z)}{2  e_H n(n-1)\cdots (n-v_H+3)}+\bigo(n^{-1}) \\
	&=\frac{\Delta_{st}t(H,Z)}{2  e_H (n-2)\cdots (n-v_H+1)}+\bigo(n^{-1}),
}
we instead bound 
\ben{\label{eq:bbbd}
\IP\bbbclr{\bbabs{\frac{\Delta_{st} t(H,Z)}{2  e_H (n-2)\cdots (n-v_H+1)}-(\astar)^{e_H-1}}>\mu'},
}
for sufficiently large $n$ and small $\mu'>0$. Using the argument of \cite[Equation~(5), p.6]{Rucinski1988}; see also Remark~\ref{rem:var}; we have that for fixed $\astar$, and {fixed} $\ell\geq1$,
\be{
\IE\bbcls{\bclr{\Delta_{st} t(H,Z)- \IE\Delta_{st} t(H,Z)}^{2\ell}}{=}  \bigo\bclr{ n^{\ell(2\abs{v(H)}-5)}}.
}
Now noting that 
\be{
\IE\Delta_{st} t(H,Z)=2  e_H (n-2)\cdots (n-v_H+1) (\astar)^{e_H-1},
}
we use Markov's inequality in~\eq{eq:bbbd} to find that for fixed $\mu'$,
\be{
\IP\bbbclr{\bbabs{\frac{\Delta_{st} t(H,Z)}{2  e_H (n-2)\cdots (n-v_H+1)}-(\astar)^{e_H-1}}>\mu'}= \bigo\bclr{n^{-\ell}}.
}
By choosing~$\ell$ large enough and using a union bound, we have that
$\IP\bclr{Z\not\in\cE_{\mu}}=\bigo\bclr{n^{-1/2}}$, as desired.
\end{proof}

\section*{Acknowledgments}

GR acknowledges support from ARC grant DP150101459, from the Alan
Turing Institute, and from the COST Action CA15109.
NR acknowledges support from ARC grant DP150101459.
We thank the referee for pointing out the connection to Dobrushin mixing conditions, among other comments that have greatly improved the paper. We also thank 
Arthur Sinulis and Ronan Eldan for helpful remarks.


\end{document}